
\documentclass[10pt,journal,compsoc]{IEEEtran}
%


%

%
\ifCLASSOPTIONcompsoc
  \usepackage[nocompress]{cite}
\else
  \usepackage{cite}
\fi
%

%
\ifCLASSINFOpdf
\else
\fi

\usepackage{amsthm}
\usepackage{amsmath}
\usepackage{mathtools}
\usepackage{amssymb}
\usepackage{bbm}
\usepackage{enumitem}
\usepackage{multirow}
\usepackage{makecell}
\usepackage{subcaption}
\usepackage{xcolor}
\usepackage{algorithm,algorithmic}
\usepackage[normalem]{ulem}
\usepackage{url}
\usepackage{graphicx}

\newtheorem{theorem}{Theorem}[section]
\newtheorem{definition}[theorem]{Definition}
\newtheorem{corollary}[theorem]{Corollary}
\newtheorem{lemma}[theorem]{Lemma}
\newtheorem{example}[theorem]{Example}
\newtheorem{proposition}[theorem]{Proposition}
\newtheorem{remark}[theorem]{Remark}

\newcommand{\R}{\mathbb{R}}

\newcommand{\Pc}{\mathcal{P}_c}

\newcommand{\norm}[1]{\left\lVert#1\right\rVert}
\newcommand{\abs}[1]{\left\vert#1\right\vert}

\newcommand{\sgn}{\textnormal{sgn}}


\hyphenation{op-tical net-works semi-conduc-tor sub-dif-feren-tial con-tinu-ation }

\begin{document}
%
\title{On the Treatment of Optimization Problems with L1 Penalty Terms via Multiobjective Continuation}
%
%
%
%

\author{Katharina Bieker,
        Bennet Gebken
        and~Sebastian Peitz
\IEEEcompsocitemizethanks{\IEEEcompsocthanksitem K. Bieker and B. Gebken are with the Department
of Mathematics, Paderborn University, Germany.\protect\\
E-mail: bieker@math.upb.de\protect\\
\IEEEcompsocthanksitem S. Peitz is with the Department of Computer Science, Paderborn University, Germany.
}%
\thanks{~}}

%
%

\markboth{~}%
{Shell \MakeLowercase{\textit{et al.}}: Bare Demo of IEEEtran.cls for Computer Society Journals}
%



\IEEEtitleabstractindextext{%
\begin{abstract}
We present a novel algorithm that allows us to gain detailed insight into the effects of sparsity in linear and nonlinear optimization. Sparsity is of great importance in many scientific areas such as image and signal processing, medical imaging, compressed sensing, and machine learning, 
as it ensures robustness against noisy data and yields models that are easier to interpret
due to the small number of relevant terms. It is common practice to enforce sparsity by adding the $\ell_1$-norm as a penalty term. 
In order to gain a better understanding and to allow for an informed model selection, we directly solve the corresponding multiobjective optimization problem (MOP) that arises when minimizing the main objective and the $\ell_1$-norm simultaneously. 
As this MOP is in general non-convex for nonlinear objectives, the penalty method will fail to provide all optimal compromises. To avoid this issue, we present a continuation method specifically tailored to MOPs with two objective functions one of which is the $\ell_1$-norm. Our method can be seen as a generalization of homotopy methods for linear regression problems to the nonlinear case.
Several numerical examples -- including neural network training -- demonstrate our theoretical findings and the additional insight gained by this multiobjective approach.
\end{abstract}
%
\begin{IEEEkeywords}
Multiobjective Optimization, Nonsmooth Optimization, Machine Learning, Sparsity
\end{IEEEkeywords}}
%
%
\maketitle
%
\IEEEdisplaynontitleabstractindextext
%
%
\IEEEpeerreviewmaketitle
\section{Introduction}
Optimization problems are at the center of a very large number of problems in science and engineering. In many situations, the task of minimizing some specified objective function is further complicated by the desire to enforce sparsity,
i.e., to obtain a solution with as few non-zero entries as possible,
which has several important advantages. Most notably,
sparse solutions tend to be robust against noise. 
Furthermore, they often allow for a better analysis and interpretation due to the small number of relevant terms, for instance in the identification of governing equations \cite{BPK16}. Practical applications can be found in many different areas, e.g., signal processing, compressed sensing, neural network training and medical imaging \cite{EK12,SurveyL1,SpecialIssueSparse,Elad2010,Plumbley2010}.

In signal and image processing, one frequently considers (overdetermined) linear regression models
with an $\ell_1$ penalty term:
\begin{align}\label{eq:lin_penalty}
\min_{x \in \R^n} \norm{Ax-b}_2^2 + \lambda \norm{x}_1,
\end{align}
where $A \in \R^{m\times n}$, $b \in \R^m$ and $\lambda \in \R^{\ge0}$. This problem is often referred to as \emph{Lasso} \cite{Lasso} or \emph{basis pursuit} \cite{BasisPursuit}.
Since appropriately weighting the penalty term is difficult a priori, the solution is computed for multiple values of $\lambda$, for instance in \emph{homotopy methods}~\cite{osborne2000,malioutov2005,bringmann2016,donoho2008}, where the entire solution path for $\lambda \in \R^{\ge0}$ is tracked. The method is based on the subdifferential of the $\ell_1$-norm and the resulting Karush-Kuhn-Tucker (KKT) conditions, while exploiting the fact that the structure of zero entries along the solution path is locally constant almost everywhere.

In recent years, nonlinear models have become increasingly popular, in particular due to the advances in deep learning and the associated neural network training, where sparsity can help to avoid overfitting \cite{Goodfellow2016}. As in the linear case, the use of $\ell_1$ penalty terms is a natural and common choice for obtaining sparse models, resulting in the problem
\begin{equation}\label{eq:nonlin_penalty}
\min_{x\in\R^n} f(x) + \lambda \|x\|_1 \quad \text{with} \quad \lambda \in \R^{\ge0}.
\end{equation}
Since $f$ is based on a nonlinear model and is generally non-convex, \eqref{eq:nonlin_penalty} is much more difficult to solve than \eqref{eq:lin_penalty}.

In contrast to the classical $\ell_1$ penalty approach, we will here interpret sparse optimization as a (non-smooth) \emph{multiobjective optimization problem} (MOP), where the objectives $f$ and $\norm{\cdot}_1$ are minimized simultaneously, i.e.,
\begin{equation}
\tag{MOP-${\ell_1}$}
\min_{x \in \R^n} \left( \begin{array}{c}
f(x)  \\
\norm{x}_1
\end{array}\right).
\label{eq:MOP_l1}
\end{equation}
In this context, \eqref{eq:nonlin_penalty} is equivalent to the \emph{weighted-sum approach}, i.e., $\min_{x \in \R^n} \alpha_1 f(x) + \alpha_2 \|x\|_1$ with $\alpha_1, \alpha_2 \in [0,1]$, $\alpha_1 + \alpha_2 = 1$ \cite{Miettinen2004}.
It is well-known that every solution of the weighted-sum approach is a solution of the MOP \cite{Miettinen2004}. 
As shown in Figure~\ref{fig:weighted_sum}, the weight vector $\alpha$ is orthogonal to the \emph{Pareto front}. Varying $\alpha$ thus yields different compromise solutions. However, if $f$ is non-convex, then $\alpha$ does no longer uniquely define a single Pareto optimal point, and we cannot find points in the non-convex parts of the Pareto front, see \cite{Ehrgott2005,Miettinen2004} for in-depth discussions.
This is also relevant in practice, e.g., in neural network training. For nonconvex loss functions, well generalizing sparse solutions may not be computable via the penalty approach. This is demonstrated in Section~\ref{subsec:neural_network}.
\begin{figure}
	\centering
	\parbox[b]{0.15\textwidth}{\centering \includegraphics[width=0.15\textwidth]{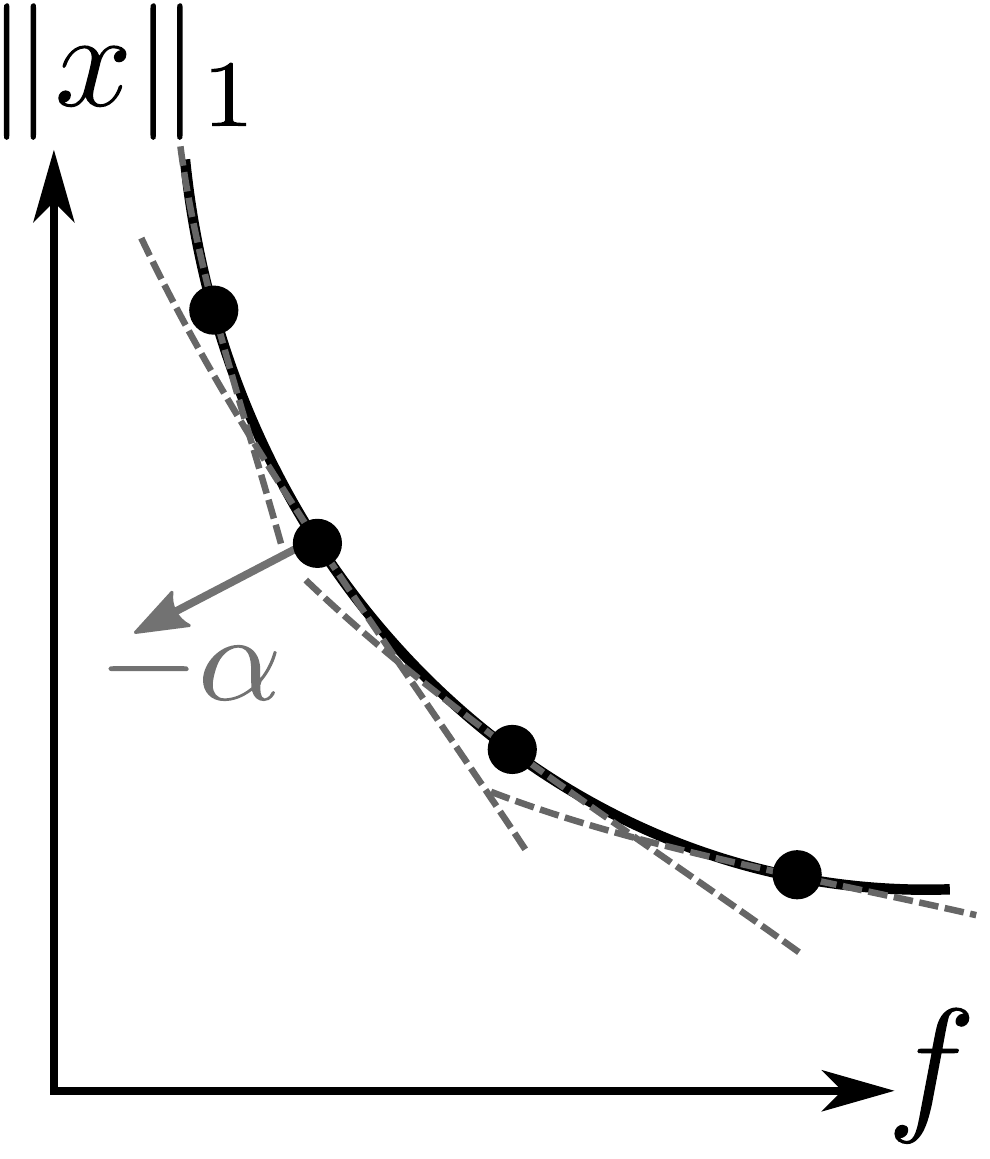} \\ (a)} \hfil
	\parbox[b]{0.15\textwidth}{\centering \includegraphics[width=0.15\textwidth]{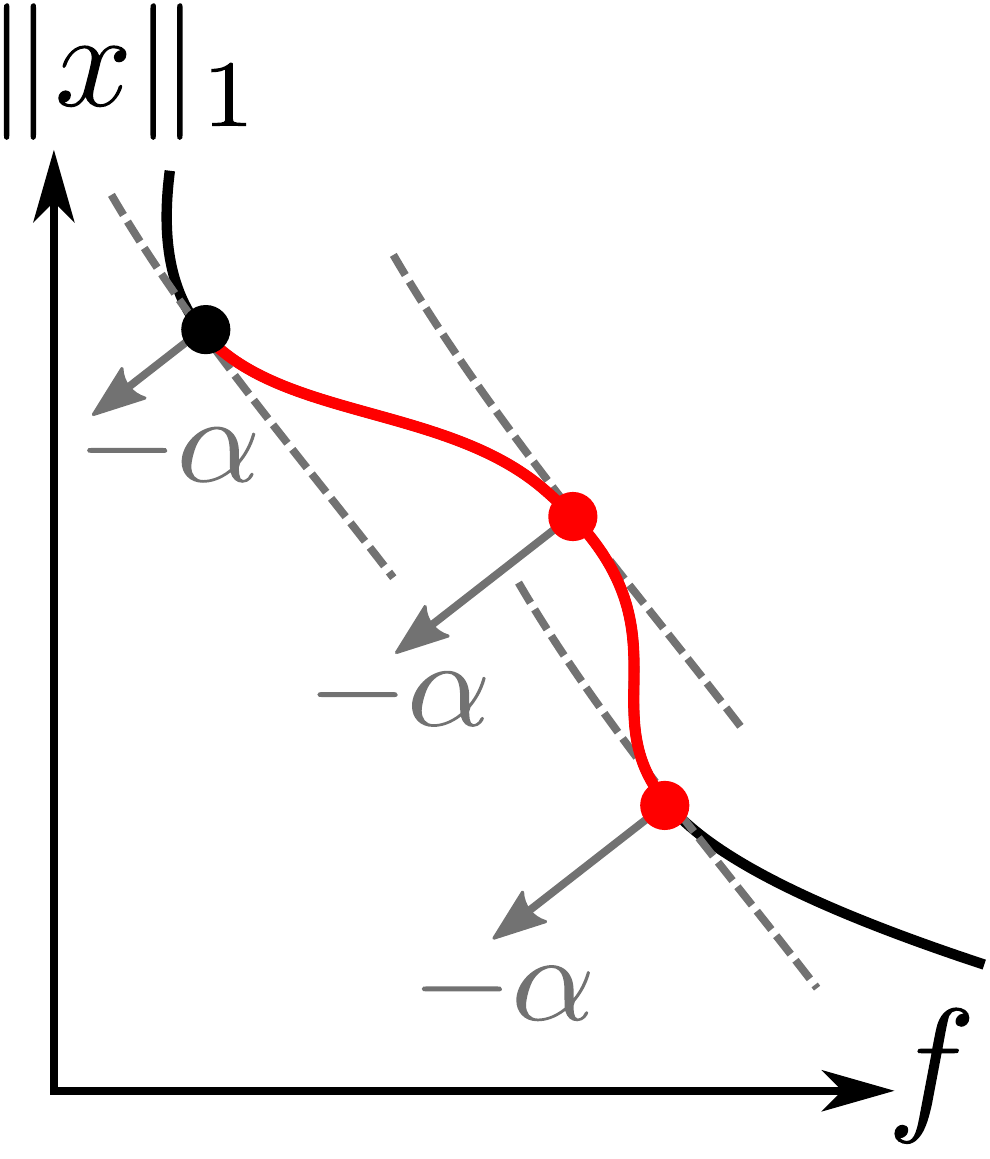} \\ (b)} \hfil
	\parbox[b]{0.15\textwidth}{\centering \includegraphics[width=0.15\textwidth]{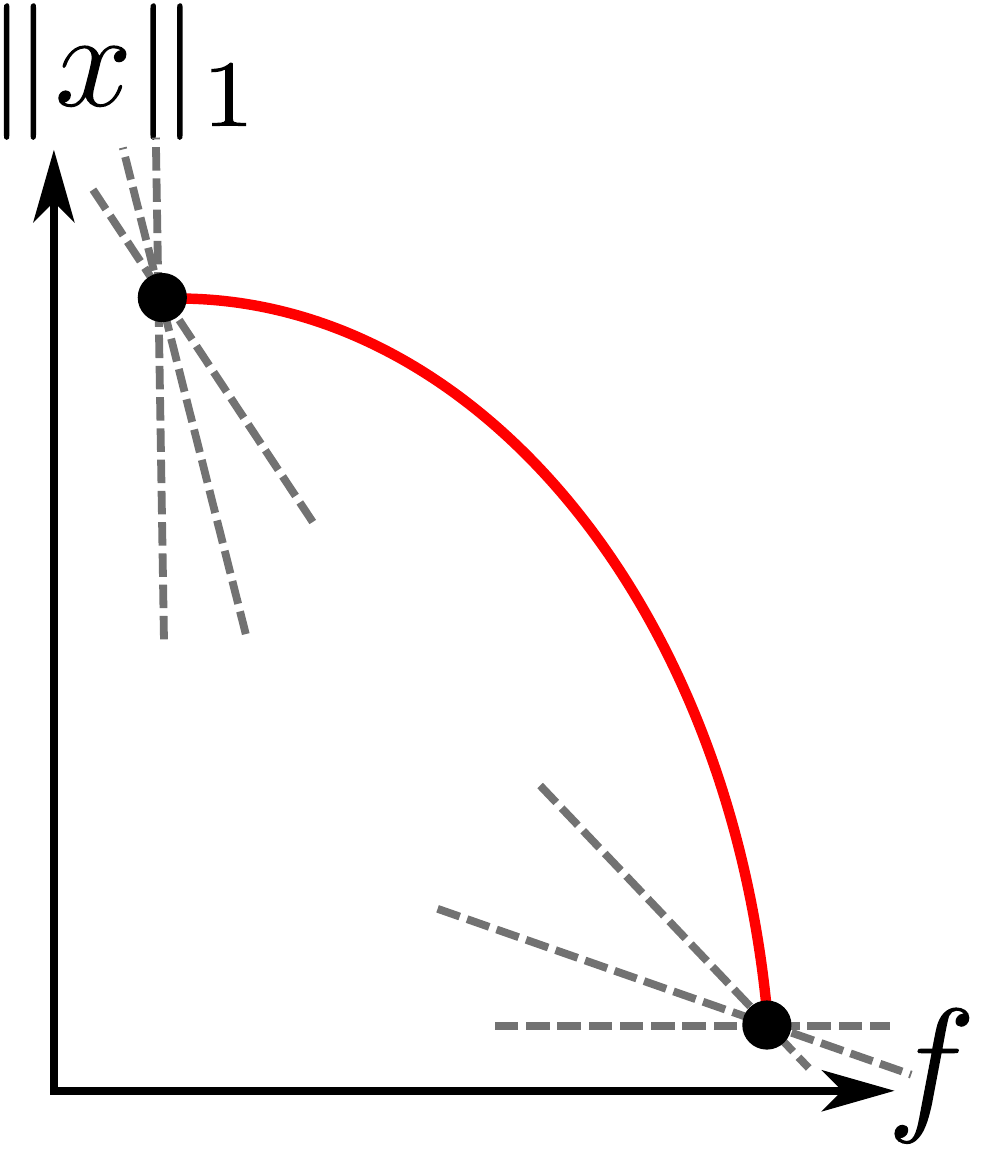} \\ (c)}
	\caption{(a) Different points on the Pareto front by solving \eqref{eq:MOP_l1} using the weighted-sum approach with varying $\alpha$. (b) For non-convex problems, multiple solutions correspond to the same $\alpha$ such that the solution cannot be computed in the non-convex regions. 
	(c) For strictly concave problems, only the individual minima can be found.}
	\label{fig:weighted_sum}
\end{figure}

Alternative solution approaches suitable for non-convex problems are \emph{evolutionary algorithms} \cite{Deb2001,Deb2002}, \emph{scalarization methods} \cite{Miettinen2004}, as well as \emph{continuation methods} \cite{Hillermeier2001,ParetoTracer,SCM+20}.
The latter require sufficiently smooth objective functions ensuring the existence of a tangent space as the idea is to compute a discretization of the Pareto critical set by first performing a so-called \emph{predictor step} along the tangent space and then computing the nearest Pareto critical point using an optimization procedure (\emph{corrector step}).

In this paper, we present a new continuation method for non-smooth MOPs with two objectives, where the first objective is twice continuously differentiable and the second objective is the $\ell_1$-norm (Section~\ref{sec:PC}). 
In particular, the first objective may be non-convex. 
Although the solution set of non-smooth MOPs does not generally possess the smooth structure required for classical continuation, we will show that this specific problem class is piecewise-smooth. 
Thus, the solution set can be computed via classical continuation up to ``kinks''. 
We will show how these kinks can be detected and addressed within the continuation by exploiting the special structure of the optimality conditions specifically derived for this purpose. The global minimal point $0 \in \R^n$ of the $\ell_1$-norm can be used as the starting point. 

Compared to scalarization methods (like the \emph{Adam descent method} \cite{KB14} applied to \eqref{eq:nonlin_penalty}), our approach has the advantage that we compute an approximation of the entire Pareto set instead of just a single point. In particular, we do not have to choose a penalty parameter. Compared to evolutionary methods (which also approximate the entire Pareto set), our approximation is generally closer to the true Pareto set while requiring fewer function evaluations. Furthermore, our method detects structural properties like kinks or the connectedness of the Pareto set, which remain unnoticed in (standard) evolutionary methods. On the other hand, some of the disadvantages and challenges of our method will be discussed in Section~\ref{sec:high_dimensional}.

Beyond that, alternative approaches at the intersection of sparse and multiobjective optimization are, for instance, the training of (linear) SVM models using $\varepsilon$-constraint scalarization \cite{acskan2014,aytug2012}, or the solution of MOPs with linear models and the $\ell_1$-norm via evolutionary algorithms \cite{Li2012}.
In the context of sparse optimization, generalizations (for convex problem classes) of the classical homotopy methods were introduced in \cite{park2007,Rosset2004,rosset2005,Zhao2004}, for instance to arbitrary convex loss-functions or convex penalty terms.
Thus, the main advantage of our method is the treatment of nonlinear, non-convex problems, which will be demonstrated using several examples of varying complexity (Section~\ref{sec:numerics}).

\section{Multiobjective and non-smooth optimization}  
\label{sec:overview_optimization}

Before presenting the solution approach, we give a short overview of multiobjective and non-smooth optimization. For more details, see \cite{Miettinen2004, Ehrgott2005, Hillermeier2001} and \cite{Clarke1990,Bagirov2014}, respectively. 

The general (unconstrained) multiobjective optimization problem is to minimize multiple (real-valued) functions $f_i:~\R^n \rightarrow \R$, $i \in \{1,...,k\}$, at the same time:
\begin{equation}
\tag{MOP}
    \min_{x \in \R^n} F(x) = \min_{x \in \R^n} \left( \begin{array}{c}
    f_1(x)  \\
    \vdots \\
    f_k(x)
    \end{array}\right).
\label{eq:MOP}
\end{equation}

Due to the loss of a total order of $\R^k$ for $k>1$, the solution to \eqref{eq:MOP} is not a specific point, but the \emph{set of non-dominated points}, also referred to as the \emph{Pareto set}. Non-dominated points $x^*$ are characterized by the fact that there exists no $x$ with $f_i(x) \le f_i(x^*)$ for all $i \in \{1, \dots, k\}$ and $f_j(x) < f_j(x^*)$ for at least one $j \in \{1, \dots, k\}$. The image of the Pareto set under $F$ is known as the \emph{Pareto front}, lying in the \emph{objective space} $\R^k$.

If all objectives are continuously differentiable, then these points satisfy the well-known necessary KKT conditions, which state that if a point $x^*$ is locally Pareto optimal, there exists a convex combination of the individual gradients that is zero \cite{Hillermeier2001}, i.e., there exists $\alpha^* \in (\R^{\ge0})^k$ with
\begin{align*}
    \sum_{i=1}^k \alpha_i^* \nabla f_i(x^*) = 0 \text{ and } \sum_{i = 1}^k \alpha_i^* = 1.
\end{align*}

Points satisfying the KKT conditions are denoted as \emph{Pareto critical}, and the set containing all Pareto critical points is the \emph{Pareto critical set}, which is a superset of the Pareto set.

As the $\ell_1$-norm does not satisfy the required smoothness assumptions for the classical KKT conditions, we need to consider subgradients instead.

\begin{definition}[Subdifferential and subgradient]
	Let $f:~\R^n \rightarrow \R$ be locally Lipschitz continuous, and let $\Omega \subset \R^n$ be the set of points where $f$ is not differentiable. Then
	\begin{align*}
    	\partial f(x) := \mathbf{Conv} (\{ &g \in \R^n~|~\exists (x_j)_j \in \R^n \setminus \Omega \text{ with } x_j \rightarrow x \\
    	&\text{and } \nabla f(x_j) \rightarrow g \text{ for } j \rightarrow \infty \} )
	\end{align*}
	is the \emph{(Clarke) subdifferential of} $f$ \emph{in} $x$, where $\mathbf{Conv}$ denotes the convex hull. An element $g \in \partial f(x)$ is called a \emph{subgradient}. 
\end{definition}

If $f$ is continuously differentiable in $x$, then the subdifferential is the set containing the gradient, i.e., $\partial f(x) = \{\nabla f(x)\}$. 
In case of the $\ell_1$-norm, the subgradients take the following form. 

\begin{example}[Subgradients and subdifferential of the $\ell_1$-norm]\label{ex:l1_norm}
	The $\ell_1$-norm in $\R^n$,
	$f_{\ell_1}(x) = \norm{x}_1 = \sum_{i=1}^n |x_i|$, 
	is non-differentiable in all  $x$ with $x_j = 0$ for some $j \in \{1,\dots,n\}$. 
	To calculate $\partial f_{\ell_1}(x)$ for an arbitrary $x \in \R^n$, we first distinguish between \textbf{\emph{active}} and \textbf{\emph{inactive}} indices, with $\{j \in J ~|~x_j \neq 0\}=:A(x)$ being the \textbf{\emph{active set}}.
	It is easy to see that
	\begin{equation*}
    	g = \left( \begin{array}{c} g_1 \\ \vdots \\ g_n \end{array} \right) \in \partial f_{\ell_1}(x) \Leftrightarrow \left\{
    	\begin{array}{ll}
    	g_j = \sgn(x_j) & \text{if } j \in A(x),\\
    	g_j \in [-1,1] & \text{if } j \notin A(x).
    	\end{array} 
    	\right.
	\end{equation*}
	
	We denote the number of active indices by $n^A(x) := \abs{A(x)}$ and the number of inactive indices by $n^0(x) :=  n - n^A(x)$.
	The subdifferential is now the convex hull of $M(x) := 2^{n^0(x)}$ vectors $g^i$ containing the extreme points of the subdifferential:
	\begin{equation*}
	    \partial f_{\ell_1}(x) = \mathbf{Conv}\left(\left\{g^i ~|~ i \in \{1,\dots,M(x)\}\right\}\right) \text{, i.e.,}
	\end{equation*}
	\[
	g^i_j = \sgn(x_j) ~\forall j \in A(x) \quad \text{and} \quad g^i_j \in \{-1,1\} ~\forall j \notin A(x).
	\]
	
	Consider the point $\hat{x} = (2, 0, -3, 0)^\top \in \R^4$ for example. Then we have $A(\hat x) = \{1, 3\}$, $n^A(\hat x) = 2$, $n^0(\hat x) = 2$, $M(\hat x) = 4$, and
	\begin{equation*}
    	\partial f_{\ell_1}(\hat{x}) = \mathbf{Conv}\left( \left\{\begin{pmatrix}1 \\ 1 \\ -1 \\ -1\end{pmatrix}, \begin{pmatrix}1 \\ -1 \\ -1 \\ -1\end{pmatrix}, \begin{pmatrix}1 \\ 1 \\ -1 \\ 1\end{pmatrix}, \begin{pmatrix}1 \\ -1 \\ -1 \\ 1\end{pmatrix} \right\}\right).
	\end{equation*}
\end{example}

Analogous to the smooth case, necessary optimality conditions for MOPs containing non-smooth objective functions can be obtained \cite{Makela2014}.

\begin{proposition}[Non-smooth KKT conditions]
	Assume that $F : \R^n \rightarrow \R^k$ is locally Lipschitz continuous and $x^*$ is locally Pareto optimal. Then
	\begin{equation}
    	0 \in \mathbf{Conv} \left( \bigcup_{i = 1}^k \partial f_i(x^*) \right) \subset \R^n.
    	\label{eq:KKT_nonsmooth}
	\end{equation}
\end{proposition}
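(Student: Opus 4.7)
The plan is to argue by contradiction using a separating hyperplane and the Clarke generalized directional derivative. Suppose the KKT inclusion fails, i.e.\ $0 \notin C := \mathbf{Conv}\bigl(\bigcup_{i=1}^k \partial f_i(x^*)\bigr)$. Since each $\partial f_i(x^*)$ is nonempty, convex and compact (a standard property of the Clarke subdifferential for locally Lipschitz functions), $C$ is itself nonempty, convex and compact. Hence one can strictly separate $0$ from $C$: there exists a direction $d \in \R^n$ with $\langle g,d\rangle < 0$ for every $g \in C$, and by compactness we may assume there is a uniform $\varepsilon > 0$ with $\langle g,d\rangle \le -\varepsilon$ for all $g \in C$. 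In particular, for each $i$,
\begin{equation*}
\max_{g \in \partial f_i(x^*)} \langle g,d\rangle \le -\varepsilon < 0.
\end{equation*}

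Next, I would invoke the identification of the Clarke generalized directional derivative with the support function of the subdifferential, namely
\begin{equation*}
f_i^{\circ}(x^*;d) \;=\; \max_{g \in \partial f_i(x^*)} \langle g,d\rangle,
\end{equation*}
which is the classical formula (a result due to Clarke for locally Lipschitz functions). This turns the separation inequality into $f_i^{\circ}(x^*;d) \le -\varepsilon$ for every $i$. Using the inequality $\limsup_{t \downarrow 0} \frac{f_i(x^*+td) - f_i(x^*)}{t} \le f_i^{\circ}(x^*;d)$, I would deduce that for each $i$ there is $t_i > 0$ such that $f_i(x^* + td) < f_i(x^*)$ for all $t \in (0,t_i)$.

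Taking $t \in (0,\min_i t_i)$ produces a point $x^* + td$, arbitrarily close to $x^*$, that strictly dominates $x^*$ in every component of $F$. This contradicts local Pareto optimality of $x^*$, completing the proof. The main obstacle, and the step that does the real work, is the passage from the separating direction $d$ to a simultaneous descent direction for all $f_i$; this relies on the non-trivial identification of $f_i^{\circ}(x^*;\cdot)$ with the support function of $\partial f_i(x^*)$ together with the upper-bound property of $f_i^{\circ}$ on difference quotients. Everything else (compactness/convexity of the Clarke subdifferential, separation of $0$ from a convex compact set not containing it) is standard and may be cited from \cite{Clarke1990}.
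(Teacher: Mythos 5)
Your argument is correct. Note that the paper itself does not prove this proposition at all --- it is stated with a citation to the reference [M\"akel\"a et al.], so there is no in-paper proof to compare against; what you give is essentially the standard textbook argument behind that citation. The chain of reasoning is sound: $\partial f_i(x^*)$ nonempty, convex and compact for locally Lipschitz $f_i$, hence $C=\mathbf{Conv}\bigl(\bigcup_{i=1}^k\partial f_i(x^*)\bigr)$ is compact (finite union of compacta, convex hull compact by Carath\'eodory), so $0\notin C$ admits strict separation with a uniform margin $\varepsilon$; the support-function identity $f_i^{\circ}(x^*;d)=\max_{g\in\partial f_i(x^*)}\langle g,d\rangle$ and the bound $\limsup_{t\downarrow 0}\tfrac{f_i(x^*+td)-f_i(x^*)}{t}\le f_i^{\circ}(x^*;d)\le-\varepsilon$ then yield $f_i(x^*+td)<f_i(x^*)$ for all sufficiently small $t>0$ and all $i$ simultaneously, and choosing $t$ small enough that $x^*+td$ stays in the neighborhood on which local Pareto optimality holds gives the contradiction (in fact it shows $x^*$ is not even locally \emph{weakly} Pareto optimal, so you prove slightly more than needed). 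The only cosmetic point is that the separation step should be phrased as: strict separation of the compact convex set $C$ from $\{0\}$ directly gives $\sup_{g\in C}\langle g,d\rangle<0$, so the ``uniform $\varepsilon$'' is immediate rather than an extra compactness argument --- but this does not affect correctness.
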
  

Similar to the smooth case, we call $x \in \R^n$ \emph{Pareto critical} if it satisfies the KKT conditions \eqref{eq:KKT_nonsmooth} and we call the set of all those points the \emph{Pareto critical set} $\Pc$.
\section{The Continuation Method}\label{sec:PC}
Our goal is to compute the entire Pareto critical set $\Pc$ of \eqref{eq:MOP_l1} using a continuation method that exploits the structure of the $\ell_1$-norm and its subgradients.
As a starting point we can use $x = 0$ which is always Pareto critical as the minimum of the $\ell_1$-norm, and then move along $\Pc$ in the direction where the value of $f$ decreases and $\norm{x}_1$ increases.

\begin{algorithm}[b]
	\begin{algorithmic}
		\STATE \textbf{Input:} $f:\R^n \rightarrow \R \in \mathcal{C}^2$, $x_{\text{start}} \in \R^n$ satisfying \eqref{eq:KKT_nonsmooth}
		\STATE \textbf{Output:} Discretization of the Pareto critical set $\mathcal{P}_{\textnormal{PC}}$
		
		\STATE $\mathcal{P}_{\textnormal{PC}} = \emptyset$, $x^0 = x_{\text{start}}$
		\WHILE{End of $\Pc$ component has not been reached}
		\STATE $\mathcal{P}_{\textnormal{PC}} = \mathcal{P}_{\textnormal{PC}} \cup \{x^0\}$\\
		\IF{$\Pc$ is locally smooth}
		\STATE Predictor step tangential to $\Pc$: \hfill $x^p$ $\leftarrow$ predictor($x^0$)\\ 
		\STATE Corrector step with $x^c \in \Pc$: \hfill $x^c$ $\leftarrow$ corrector($x^p$)\\ 
		\STATE $x^0 = x^c$\\
		\ELSE
		\STATE (De-)Activate entries\\
		\ENDIF
		\ENDWHILE
	\end{algorithmic}
	\caption{PC algorithm} \label{alg:PC_rough_concept}
\end{algorithm}

Continuation methods (or \emph{predictor-corrector (PC) methods}) yield a sequence of Pareto critical points by -- starting from a Pareto critical point -- performing a prediction step along the tangent space of $\Pc$ and then a correction step which results in the next Pareto critical point close by, see \cite{Hillermeier2001} for details.
For the non-smooth problem \eqref{eq:MOP_l1}, the Pareto critical set may contain kinks such that we cannot exclusively rely on the tangent space in the prediction step. 
Thus, when we reach such a kink, we additionally require a mechanism that computes all possible directions in which the Pareto critical set might continue, see Algorithm~\ref{alg:PC_rough_concept} for a sketch.
To this end, we will first have a more detailed look at the KKT conditions \eqref{eq:KKT_nonsmooth} for the special case of problem~\eqref{eq:MOP_l1} in Section~\ref{subsec:OC_MOPL1} before giving a detailed description of the predictor and corrector steps in Sections~\ref{subsec:predictor} and \ref{subsec:corrector}. 
We then discuss strategies for the (de-)activation of indices (i.e., how to overcome kinks) in Section~\ref{subsec:PC_act_deact}, and subsequently address implementation and globalization in Section~\ref{subsec:Implementation}.
\subsection{Optimality conditions for \eqref{eq:MOP_l1}}
\label{subsec:OC_MOPL1}
Since we assume $f$ to be at least twice continuously differentiable, the gradient $\nabla f(x)$ exists for every $x \in \R^n$. 
The subdifferential of the $\ell_1$-norm is given by the convex hull of $M(x) = 2^{n^0(x)}$ vectors $g^i$ given in Example~\ref{ex:l1_norm}. 
To simplify the notation, we will write $M$ and $n^0$ instead of $M(x)$ and $n^0(x)$.
Inserting this into the KKT conditions \eqref{eq:KKT_nonsmooth}, a point $x$ is Pareto critical for \eqref{eq:MOP_l1} if there exist KKT multipliers $\alpha_1, \alpha_2 \in [0,1]$ with $\alpha_1 + \alpha_2 = 1$ and  
\begin{equation}\label{eq:KKT_l1}
    \alpha_1 \nabla f(x) + \alpha_2 (\beta_1 g^1 + \dots + \beta_{M} g^{M}) = 0,
\end{equation}
where $\beta_i \in [0,1]$ for $i=1,\dots,M$ and $\sum_{i=1}^M \beta_i = 1$.
The following theorem will further simplify this condition. 

\begin{theorem}\label{th:KKT_cond}
	Let $x \in \R^n$ with $x \neq 0$ and let $a \in A(x) = \left\{j \in \{1,\dots,n\}~|~ x_j \neq 0\right\}$ be any active index.
	Then, the following statements are equivalent:
	\begin{enumerate}
		\item \label{th_state_critical} $x$ is Pareto critical for~\eqref{eq:MOP_l1}
		\item \label{th_state_newCond} The following conditions hold for every $j \in \{1,\dots,n\}$:\label{eq:th_KKT_cond}
		\begin{enumerate}
			\item If $j \in A(x)$: 
			\begin{enumerate}
				\item $\sgn((\nabla f(x))_j) = -\sgn(x_j)$ if $(\nabla f(x))_j \neq 0$ and \label{eq:th_KKT_cond_act_1}
				\item $\abs{(\nabla f(x))_j} = \abs{(\nabla f(x))_a}$
				\label{eq:th_KKT_cond_act_2}
			\end{enumerate} 
			\item If $j \notin A(x)$: $\abs{(\nabla f(x))_j} \le \abs{(\nabla f(x))_a}$\label{eq:th_KKT_cond_inact} \\
			
		\end{enumerate}
	\end{enumerate}
\end{theorem}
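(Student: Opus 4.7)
The plan is to unpack the non-smooth KKT condition~\eqref{eq:KKT_l1} coordinatewise and then read off exactly what it imposes on $\nabla f(x)$ on active and inactive indices. The two directions of the equivalence essentially amount to the same calculation run forwards and backwards, with the multiplier ratio $\lambda := \alpha_2/\alpha_1$ playing the role of the common magnitude $|(\nabla f(x))_a|$.

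For $(\ref{th_state_critical}) \Rightarrow (\ref{th_state_newCond})$, I would begin by observing that $\alpha_1 > 0$: if $\alpha_1=0$ then \eqref{eq:KKT_l1} forces $\sum_i \beta_i g^i=0$, but every extreme subgradient $g^i$ has $g^i_a=\sgn(x_a)\in\{\pm 1\}$ by Example~\ref{ex:l1_norm}, so the $a$-th component of any convex combination equals $\sgn(x_a)\neq 0$, contradiction. Hence I can divide by $\alpha_1$ and write $\nabla f(x) = -\lambda\, g$ with $\lambda := \alpha_2/\alpha_1 \ge 0$ and $g = \sum_i \beta_i g^i \in \partial\|\cdot\|_1(x)$. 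Reading this componentwise: on $A(x)$ one has $g_j=\sgn(x_j)$, so $(\nabla f(x))_j=-\lambda\sgn(x_j)$, which immediately yields (\ref{eq:th_KKT_cond_act_1}) and the constant-magnitude statement (\ref{eq:th_KKT_cond_act_2}) with common value $\lambda$; on the inactive set one has $|g_j|\le 1$, giving (\ref{eq:th_KKT_cond_inact}).

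For $(\ref{th_state_newCond}) \Rightarrow (\ref{th_state_critical})$, I would explicitly construct the multipliers and the subgradient. Let $\lambda := |(\nabla f(x))_a|$, which by (\ref{eq:th_KKT_cond_act_2}) is independent of the chosen $a\in A(x)$. If $\lambda=0$, then (\ref{eq:th_KKT_cond_act_2}) and (\ref{eq:th_KKT_cond_inact}) together give $\nabla f(x)=0$, so $(\alpha_1,\alpha_2)=(1,0)$ with any $g\in\partial\|\cdot\|_1(x)$ satisfies \eqref{eq:KKT_l1}. Otherwise set $\alpha_1 := 1/(1+\lambda)$, $\alpha_2:=\lambda/(1+\lambda)$, and define $g\in\R^n$ by $g_j := \sgn(x_j)$ for $j\in A(x)$ and $g_j := -(\nabla f(x))_j/\lambda$ for $j\notin A(x)$. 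Condition (\ref{eq:th_KKT_cond_inact}) guarantees $g_j\in[-1,1]$ on inactive indices, so $g\in\partial\|\cdot\|_1(x)$; moreover, on $A(x)$, (\ref{eq:th_KKT_cond_act_1}) together with (\ref{eq:th_KKT_cond_act_2}) yields $(\nabla f(x))_j=-\lambda\sgn(x_j)$, and on the inactive indices the definition of $g_j$ is tailored so that $\alpha_1(\nabla f(x))_j+\alpha_2 g_j=0$. Thus \eqref{eq:KKT_l1} holds.

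The only real obstacle is the bookkeeping around the degenerate case $\nabla f(x)=0$: condition (\ref{eq:th_KKT_cond_act_1}) is vacuous when $(\nabla f(x))_j=0$, so care is needed to ensure that (\ref{eq:th_KKT_cond_act_2}) automatically handles vanishing active components (forcing $\lambda=0$) and that the explicit construction of $g$ on inactive indices in the backward direction remains well-defined (division by $\lambda$ is only used in the case $\lambda>0$). Writing $g$ componentwise rather than as a convex combination of the $2^{n^0}$ extreme subgradients keeps the argument compact; the expansion in terms of the $g^i$'s is recovered a posteriori from Example~\ref{ex:l1_norm}.
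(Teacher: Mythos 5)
Your proposal is correct and follows essentially the same route as the paper: both directions unpack the KKT condition \eqref{eq:KKT_l1} componentwise, with the multiplier ratio $\alpha_2/\alpha_1$ identified with $\abs{(\nabla f(x))_a}$, and the backward direction constructs $\alpha_1,\alpha_2$ and a subgradient explicitly; working with a single aggregated $g\in\partial\norm{\cdot}_1(x)$ instead of the $\beta_{j,\pm}$ bookkeeping is only a cosmetic simplification. Your separate treatment of the degenerate case $\nabla f(x)=0$ (where $\alpha_1=1$) is a welcome extra bit of care, since the paper's backward construction implicitly assumes $(\nabla f(x))_a\neq 0$.
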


\begin{proof}
	We start by showing \ref{th_state_critical} $\Rightarrow$ \ref{th_state_newCond}.
	Since $x$ is Pareto critical, there exist $\alpha_1, \alpha_2 \in [0,1]$ with $\alpha_1 + \alpha_2  = 1$ and $\beta_i \in [0,1]$, $i=1,\dots,M$, with $\sum_{i=1}^M \beta_i = 1$ such that \eqref{eq:KKT_l1} is satisfied.
	
	Note that $\alpha_1 = 0$ would imply that zero is contained in the subdifferential of the $\ell_1$-norm at $x$, which is only the case for $x = 0$. Thus, we must have $\alpha_1 > 0$.
	
	First, assume $j \in A(x)$. Then, $(g^i(x))_j = \sgn(x_j)$ for all $i=1,\dots,M$ and 
	\begin{align*} 
		0 &= \alpha_1 (\nabla f(x))_j + \alpha_2 \sgn(x_j) \\
		&= \alpha_1 (\nabla f(x))_j + (1 - \alpha_1) \sgn(x_j).
	\end{align*}
	Since $\alpha_1 > 0$, this is equivalent to
	\begin{align*}
	(\nabla f(x))_j  =  -\frac{(1 - \alpha_1)}{\alpha_1} \sgn(x_j).
	\end{align*}
	Therefore, we have
	\begin{align*}
	\sgn(\nabla f(x))_j = -\sgn(x_j) \quad \Rightarrow \quad \abs{(\nabla f(x))_j}  =  \frac{(1 - \alpha_1)}{\alpha_1}.
	\end{align*}
	
	Since this has to hold for every active $j$, it follows
	\begin{align*}
	\abs{(\nabla f(x))_j} = \abs{(\nabla f(x))_a}.
	\end{align*}
	
	Now assume that $j$ is inactive. Define
	\begin{align*}
    	\beta_{j,-} := \sum_{i : g_j^i = -1} \beta_i, \quad
    	\beta_{j,+}  := \sum_{i : g_j^i = 1} \beta_i.
	\end{align*}
	From \eqref{eq:KKT_l1} we obtain
	\begin{align*}
    	0 \quad &= \alpha_1 (\nabla f(x))_j + \alpha_2(\beta_{j,+} - \beta_{j,-}) \\
    	\Rightarrow \quad \abs{(\nabla f(x))_j} &= \abs{\beta_{j,-} - \beta_{j,+}}\frac{1-\alpha_1}{\alpha_1} \\  &\leq \frac{1 - \alpha_1}{\alpha_1} = \abs{(\nabla f(x))_a}.
	\end{align*}
	
	For the opposite direction "$\Leftarrow$", assume that $x$ satisfies 2(a) and (b). Note that 2(a) implies $\sgn(x_a)(\nabla f(x))_a < 0$ and thus,
	\begin{align*}
	1 - \sgn(x_a)(\nabla f(x))_a > 1.
	\end{align*}
	Define
	\begin{align*}
	\alpha_1 := \frac{1}{1 - \sgn(x_a)(\nabla f(x))_a} \in (0,1).
	\end{align*}
	By 2(a)(i) we obtain 
	\begin{align*}
	\alpha_1 = \frac{1}{1 +\abs{(\nabla f(x))_a}}.
	\end{align*}
	With 2(a)(i) it follows that for all active $j$
	\begin{align*}
    	\frac{\alpha_1}{1 -\alpha_1}\abs{(\nabla f(x))_j} 
    	&= \frac{1}{\frac{1}{\alpha_1} -1}\abs{(\nabla f(x))_a} = 1.
	\end{align*}
	In particular,
	\begin{align*}
    	& 1 = \frac{\alpha_1}{1 -\alpha_1}\abs{(\nabla f(x))_j} = \frac{\alpha_1}{1 -\alpha_1} \sgn(\nabla f(x)_j) \nabla f(x)_j \\
    	\Rightarrow \quad& \frac{\alpha_1}{1 -\alpha_1} \nabla f(x)_j = \sgn(\nabla f(x)_j) = -\sgn(x_j).
	\end{align*}
	
	Analogously, for all inactive $j$, we can use 2(b) to obtain
	\begin{align*}
    	\frac{\alpha_1}{1 -\alpha_1}\abs{(\nabla f(x))_j} \leq 1 \quad
    	\Rightarrow \quad \frac{\alpha_1}{1 -\alpha_1} (\nabla f(x))_j \in [-1,1].
	\end{align*}
	
	As a result, the vector $-\frac{\alpha_1}{1 -\alpha_1} \nabla f(x)$ can be written as a convex combination of the vectors $g^i$ defined in Example~\ref{ex:l1_norm}. Let $\beta_1, ..., \beta_M$ be the corresponding coefficients and define $\alpha_2 = 1 - \alpha_1$. Then
	\begin{align*}
    	&-\frac{\alpha_1}{1 -\alpha_1} \nabla f(x) = \beta_1 g^1 + \dots + \beta_{M} g^{M} \\
    	\Leftrightarrow \quad& \alpha_1 \nabla f(x) + \alpha_2(\beta_1 g^1 + \dots + \beta_{M} g^{M}) = 0,
	\end{align*}
	showing that \eqref{eq:KKT_l1} holds.
	
\end{proof}

Theorem \ref{th:KKT_cond} states that the absolute values of all gradient entries belonging to active indices are identical. At the same time, the absolute values of gradient entries belonging to inactive indices must always be less than or equal to the absolute value of active gradient entries. 
Furthermore, the proof of the theorem yields an explicit formula for the KKT multipliers of Pareto critical points. 

\begin{remark} \label{rem:KKT_multipliers}
	If $x$ is Pareto critical for~\eqref{eq:MOP_l1} and $a$ is an active index, then KKT multipliers corresponding to $x$ are given by
	\begin{align*}
	\alpha_1 = \frac{1}{1 +\abs{(\nabla f(x))_a}}, \quad \alpha_2 = \frac{\abs{(\nabla f(x))_a}}{1 + \abs{(\nabla f(x))_a}}.
	\end{align*}
	If $\nabla f(x) \neq 0$ then the KKT multipliers are unique. In particular, this indicates that ``kinks'' in the Pareto front can only occur if $\nabla f(x) = 0$.
\end{remark}
 
As mentioned before, since \eqref{eq:MOP_l1} is a non-smooth problem, the Pareto critical set will generally contain points in which the tangent space does not exist. Theorem \ref{th:KKT_cond} can be used to characterize those points and to gain insight into the structure of $\Pc$ by discriminating between equality and strict inequality in condition 2(b).

\begin{corollary} \label{cor:MOP_active} 
	Let $x^0 \neq 0$ be a Pareto critical point such that the inequality in 2(b) of Theorem \ref{th:KKT_cond} is strict. Let $j_1 < ... < j_{n^A(x^0)}$ be the (sorted) active indices at $x^0$. Define 
	the following two mappings that lift the active components to the $n$-dimensional point $x$ and vice versa:
	\begin{align*}
	&h^1 : \R^{n^A(x^0)} \rightarrow \R^n, \quad h^1_j(\bar{x}) = 
	\begin{cases}
	0 & \text{if } j \notin A(x^0), \\
	\bar{x}_i & \text{if } j_i \in A(x^0),
	\end{cases} \\
	&h^2 : \R^n \rightarrow \R^{n^A(x^0)}, \quad h^2_i(x) = x_{j_i} \quad \forall i \in \{1,...,n^A(x^0)\}.
	\end{align*}
	Then there is an open set $U \subseteq \R^n$ with $x^0 \in U$ such that $x \in U$ being Pareto critical for \eqref{eq:MOP_l1} is equivalent to $A(x) = A(x^0)$ and $h^2(x)$ being Pareto critical for the MOP
	\begin{align} \label{eq:MOP_l1_active}
	\min_{\bar{x} \in \R^{n^A(x^0)}} \left( \begin{array}{c}
	f(h^1(\bar{x}))  \\
	\norm{\bar{x}}_1
	\end{array}\right).
	\end{align}
\end{corollary}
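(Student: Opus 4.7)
The plan is to use the strict inequality in condition 2(b) of Theorem \ref{th:KKT_cond} together with continuity of $\nabla f$ to produce a neighborhood $U$ on which the active set is forced to coincide with $A(x^0)$, and then to translate the Pareto criticality conditions between the full problem \eqref{eq:MOP_l1} and the reduced problem \eqref{eq:MOP_l1_active} using Theorem \ref{th:KKT_cond} on both sides.

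First I would construct $U$. Since $x^0_j \neq 0$ for every $j \in A(x^0)$, continuity gives an open neighborhood $U_1$ of $x^0$ on which $x_j \neq 0$ for all $j \in A(x^0)$; in particular $A(x^0) \subseteq A(x)$ for $x \in U_1$. By assumption and Theorem \ref{th:KKT_cond} we have $|(\nabla f(x^0))_j| < |(\nabla f(x^0))_a|$ for every $j \notin A(x^0)$ and $a \in A(x^0)$. Continuity of $\nabla f$ then yields a neighborhood $U_2$ on which this strict inequality persists for all such $j,a$. Set $U := U_1 \cap U_2$. The key computational fact I would use throughout is the chain-rule identity
\begin{equation*}
\bigl(\nabla_{\bar x}(f\circ h^1)(\bar x)\bigr)_i = (\nabla f(h^1(\bar x)))_{j_i},
\end{equation*}
which holds because $h^1$ is simply the linear embedding that places $\bar{x}_i$ into coordinate $j_i$ and zeros elsewhere.

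For the direction ``$\Leftarrow$'', assume $x \in U$ with $A(x) = A(x^0)$ and $h^2(x)$ Pareto critical for \eqref{eq:MOP_l1_active}. All components of $h^2(x)$ are nonzero, so every index in the reduced problem is active; applying Theorem \ref{th:KKT_cond} to \eqref{eq:MOP_l1_active}, condition 2(a) holds for $h^2(x)$, and via the chain-rule identity this is exactly condition 2(a) for $x$ at every $j \in A(x)$. Condition 2(b) for $x$ holds by the construction of $U_2$. By Theorem \ref{th:KKT_cond}, $x$ is Pareto critical for \eqref{eq:MOP_l1}.

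For the direction ``$\Rightarrow$'', assume $x \in U$ is Pareto critical for \eqref{eq:MOP_l1}. The inclusion $A(x^0) \subseteq A(x)$ holds by choice of $U_1$. Suppose for contradiction there were some $j_0 \in A(x) \setminus A(x^0)$; picking any $a \in A(x^0) \subseteq A(x)$, condition 2(a)(ii) of Theorem \ref{th:KKT_cond} applied to $x$ would force $|(\nabla f(x))_{j_0}| = |(\nabla f(x))_a|$, contradicting the strict inequality guaranteed on $U_2$. Hence $A(x) = A(x^0)$. Because $h^2(x)$ then has only nonzero components, condition 2(b) of Theorem \ref{th:KKT_cond} for \eqref{eq:MOP_l1_active} is vacuous, and condition 2(a) transfers from $x$ to $h^2(x)$ via the chain-rule identity, yielding Pareto criticality of $h^2(x)$ for \eqref{eq:MOP_l1_active}.

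The main obstacle I anticipate is not a deep analytic difficulty but rather careful bookkeeping: one must verify that the strict inequality on $U_2$ really does force $A(x) = A(x^0)$ (and cannot be circumvented by $\nabla f(x) = 0$ at an active index, which would violate 2(a)(i) by Remark \ref{rem:KKT_multipliers} and could only arise if $\alpha_1 = 1$), and that the reduction to the lower-dimensional MOP preserves Pareto criticality in both directions via the simple gradient identity above, so that Theorem \ref{th:KKT_cond} can be applied symmetrically in the two problems.
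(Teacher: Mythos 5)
Your argument is correct and follows the same route as the paper's proof: use the strict inequality in 2(b) together with continuity of $\nabla f$ (and non-vanishing of the active components) to fix the active set on a neighborhood $U$, and then transfer the conditions of Theorem~\ref{th:KKT_cond} between \eqref{eq:MOP_l1} and the reduced problem \eqref{eq:MOP_l1_active} via the linear embedding $h^1$ and the identity $\bigl(\nabla (f\circ h^1)(\bar{x})\bigr)_i = (\nabla f(h^1(\bar{x})))_{j_i}$. The only superfluous element is your closing worry about $\nabla f(x)=0$ at an active index: on $U_2$ this cannot occur as long as an inactive index exists, and if $A(x^0)=\{1,\dots,n\}$ the point is moot, so no extra verification is needed.
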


\begin{proof}
See Section B in the supplementary material.
\end{proof}

The previous corollary implies that if the inequality in 2(b) of Theorem \ref{th:KKT_cond} is strict for a point $x^0 \in \Pc$, then locally around $x^0$, the Pareto critical set of \eqref{eq:MOP_l1} coincides with the Pareto critical set of the MOP \eqref{eq:MOP_l1_active}, embedded from $\R^{n^A(x^0)}$ to $\R^n$ via $h^1$. The objectives of \eqref{eq:MOP_l1_active} are smooth around $h^2(x^0)$ (since all entries of $h^2(x^0)$ are non-zero by construction), such that we can apply the theory for smooth MOPs to see that the tangent space of the Pareto critical set $\Pc$ must exist in $x^0$. Thus, kinks in $\Pc$ can only arise in points where equality holds in 2(b) for some $j \notin A(x^0)$. These are precisely the points where the active set can potentially change, i.e., where an inactive index might be activated (or vice versa).

Corollary \ref{cor:MOP_active} shows that it is sufficient to consider the MOP \eqref{eq:MOP_l1_active} if we are in a point where the inequality in 2(b) of Theorem \ref{th:KKT_cond} is strict. Based on \eqref{eq:MOP_l1_active}, this allows us to construct the predictor (Section~\ref{subsec:predictor}) and corrector (Section~\ref{subsec:corrector}) as in the smooth case except for kinks.

\subsection{Predictor}
\label{subsec:predictor}

In the predictor step, we perform a step along the tangent space of the Pareto critical set from the starting point $x^0 \in \Pc$. 
As discussed above, we assume that an open set around $x^0$ exists in which the zero structure of Pareto critical points does not change, such that the original MOP locally reduces to the problem \eqref{eq:MOP_l1_active}, where both objective functions are at least twice continuously differentiable since the $\ell_1$-norm is applied only to non-zero entries.
This means that we can compute the predictor according to classical continuation methods, cf.~\cite{Hillermeier2001}.
More precisely, let $H : \R^{n^A(x^0) + 2} \rightarrow \R^{n^A(x^0)+1}$,
\begin{align*}
H(\bar{x},\alpha) = 
\begin{pmatrix}
\alpha_1 \nabla (f \circ h^1)(\bar{x}) + \alpha_2 \nabla f_{\ell_1}(\bar{x}) \\
\alpha_1 + \alpha_2 - 1 
\end{pmatrix}
\end{align*}
be the map containing the KKT conditions of \eqref{eq:MOP_l1_active}. Then the tangent space of the Pareto critical set is given by the first $n^A(x^0)$ components of
\begin{align} \label{eq:predictor_ker}
&\ker(H'(h^{2}(x^0),\alpha))\nonumber\\
= &\ker
\Big(\begin{array}{c}                           
\alpha_1 \nabla^2 (f \circ h^1)(h^2(x^0)) \\
0 \\
\end{array} \nonumber \\
&\qquad\qquad\qquad\qquad\begin{array}{cc}                           
\nabla (f \circ h^1)(h^2(x^0)) & \nabla f_{\ell_1}(h^2(x^0))\\
  1 & 1 \\
\end{array}\Big) \nonumber \\
= &\ker
\left(  \begin{matrix}                           
\alpha_1\nabla^2 f(x^0)_{|A(x^0)} & \nabla f(x^0)_{|A(x^0)} & s \\
0              & 1                       & 1 \\
\end{matrix}\right),
\end{align}
where $(\nabla^2 f(x))_{|A(x^0)}$ and $\nabla f(x)_{|A(x^0)}$ denote the Hessian and the gradient of $f$ reduced to the active indices, respectively, and $s = \sgn(h^2(x^0)) = \sgn(x^0)_{|A(x^0)}$ is the result of the componentwise sign function in the active indices of $x^0$. Note that Theorem \ref{th:KKT_cond} implies $\nabla f(x^0)_{|A(x^0)} = - \abs{(\nabla f(x^0))_a} s $.

If the reduced Hessian $\nabla^2 f(x)_{|A(x^0)}$ is regular, then the kernel (i.e., the tangent space) is one dimensional and we obtain a unique vector (up to scaling and sign). As discussed in \cite{Allgower1990}, we expect this to be the generic case and assume that this holds for the remainder of this paper. Otherwise, the kernel could be of higher dimension which would lead to infinitely many possible directions. In this case, we could try to find a suitable direction in the kernel by using the directional derivative of the gradient and the fact that the absolute values of the entries of the gradient in the active indices must coincide (cf.~2(a) of Theorem~\ref{th:KKT_cond}). 

If the reduced Hessian $\nabla^2 f(x)_{|A(x^0)}$ is regular, then a kernel vector $\bar{v}$ of \eqref{eq:predictor_ker} is given by
\begin{equation}  \tag{P} \label{eq:predictor_ker_v}
\begin{aligned}
\bar{v} &= \gamma \cdot (\bar{v}_1,\bar{v}_2,\bar{v}_3)^\top \quad \text{with} \quad \gamma \in \R \quad \text{and}\\
\bar{v}_1 &= \frac{(1 + \abs{(\nabla f(x^0))_a})}{\alpha_1} \left(\nabla^2 f(x^0)_{|A(x^0)}\right)^{-1} s,\\
&= (1 + \abs{(\nabla f(x^0))_a})^2 \left(\nabla^2 f(x^0)_{|A(x^0)}\right)^{-1} s,\\
\bar{v}_2 &= 1,\qquad
\bar{v}_3 = -1.
\end{aligned}
\end{equation}
The predictor of \eqref{eq:MOP_l1_active} is given by $\bar{v}_1$, while $\bar{v}_2$ and $\bar{v}_3$ represent the corresponding direction in the space of KKT multipliers.
The predictor of \eqref{eq:MOP_l1} is then $v_1 = h^1(\bar{v}_1)$.

\subsubsection{Direction}
If the reduced Hessian is regular, there are only two possible choices for the direction of the predictor step, namely $v_1$ and $-v_1$. As we want to move along the Pareto front in the direction where $f$ becomes smaller (and the $\ell_1$-norm larger), it would be intuitive to choose the sign of $\nabla f(x)^\top v_1$ as an indicator, as proposed in \cite{Hillermeier2001} and \cite{ParetoTracer}. 
While this mostly works, the need to cross a ``turning point'' may arise. These are points where we follow the Pareto critical set in the same direction but the corresponding direction in the objective space vanishes and then turns around. This phenomenon is not related to the non-smoothness of the $\ell_1$-norm but can also occur in the smooth parts, i.e., on a single \emph{activation structure} (the set of points $x\in\Pc$ with a constant active set $A(x)$), as shown in the following example.

\begin{example}\label{ex:turningPoint}
	Consider \eqref{eq:MOP_l1} with
	\begin{align}\label{eq:example1}
	f(x) = (x_1 + 1)^2 + (x_2 - 1)^4 - \tfrac{1}{2} \left( x_2 - \tfrac{1}{4} \right)^3.
	\end{align}
	Figure~\ref{fig:example1} shows the Pareto critical set for this problem, which can be computed by hand in this case. We see that we have two turning points $x^1$ and $x^2$ in the Pareto critical set, where the direction in the objective space turns around. 
\end{example}

\begin{figure}[h!]
	\centering
	\parbox[b]{0.24\textwidth}{\centering \includegraphics[width=0.24\textwidth]{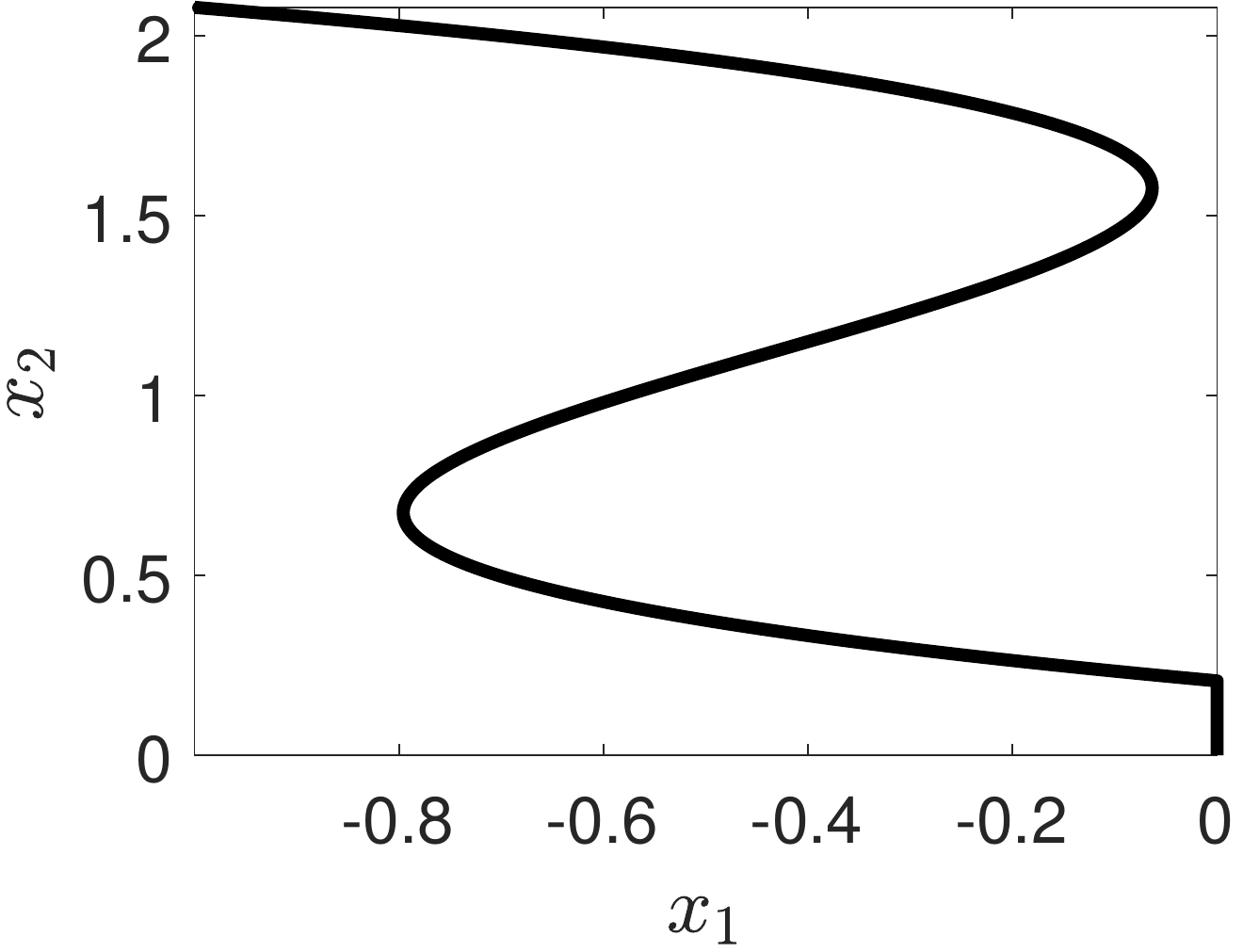} \\ (a) } \hfil
	\parbox[b]{0.24\textwidth}{\centering \includegraphics[width=0.24\textwidth]{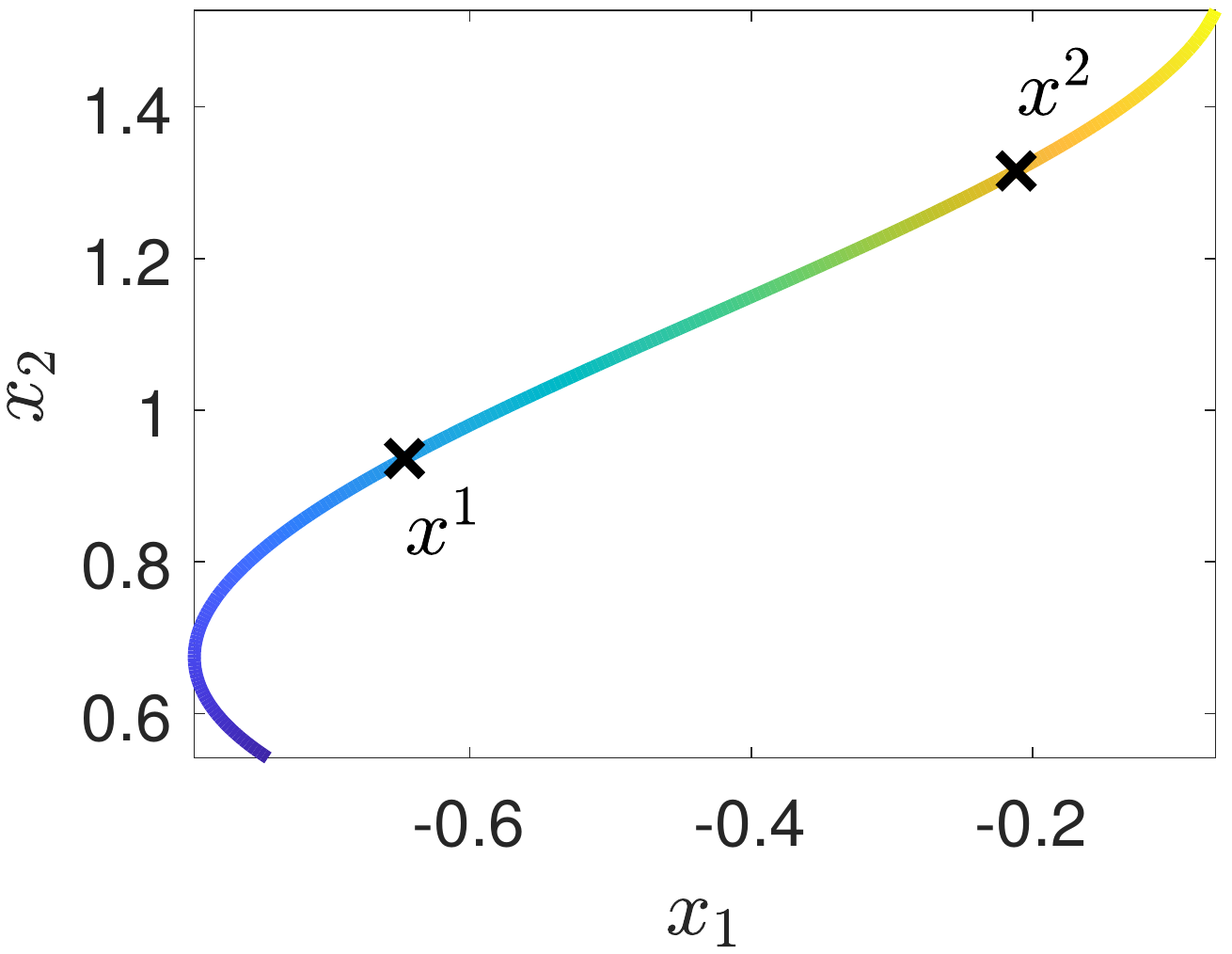} \\ (b) }
	\ \\
	\parbox[b]{0.24\textwidth}{\centering \includegraphics[width=0.24\textwidth]{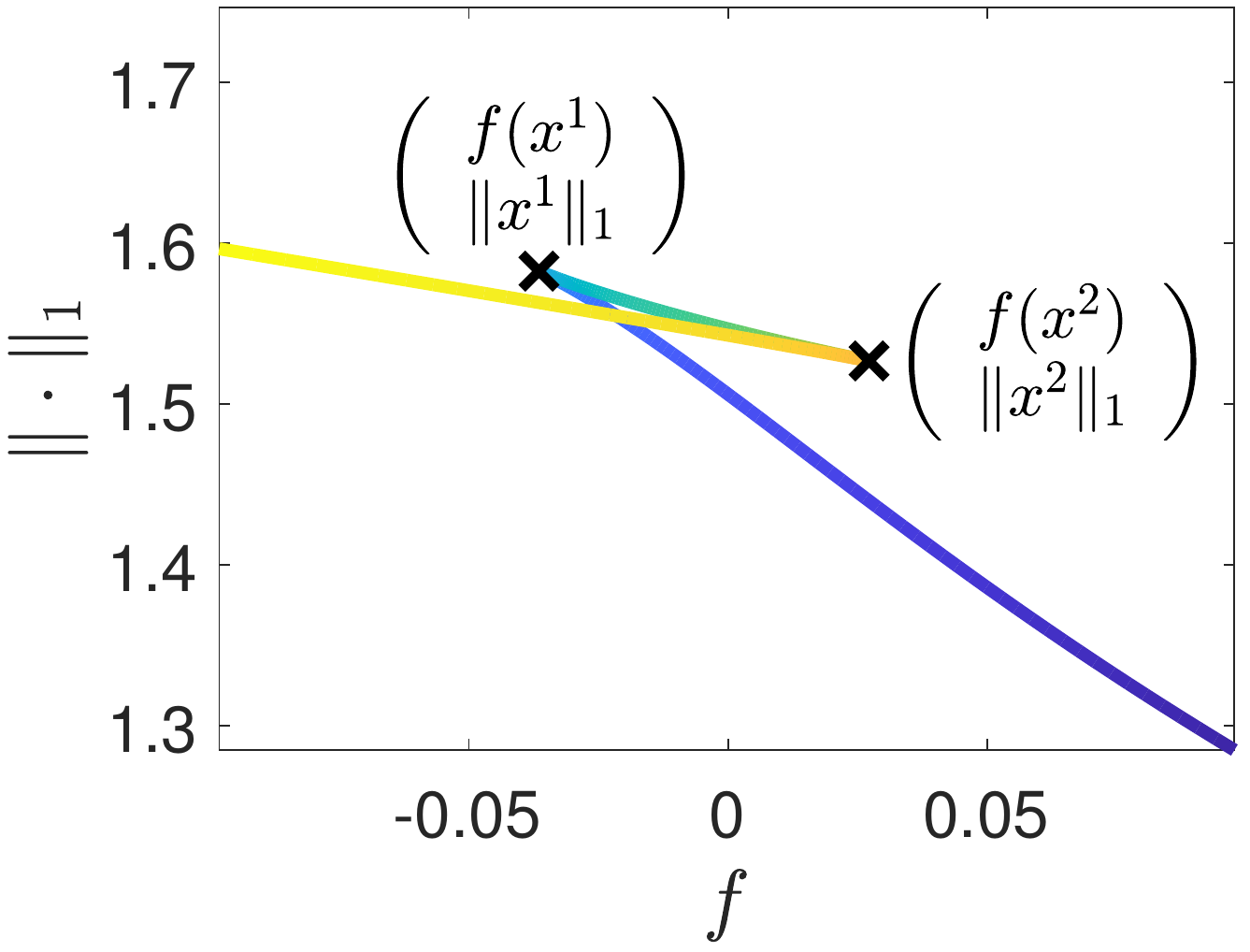} \\ (c) } \hfil
	\parbox[b]{0.24\textwidth}{\centering \includegraphics[width=0.24\textwidth]{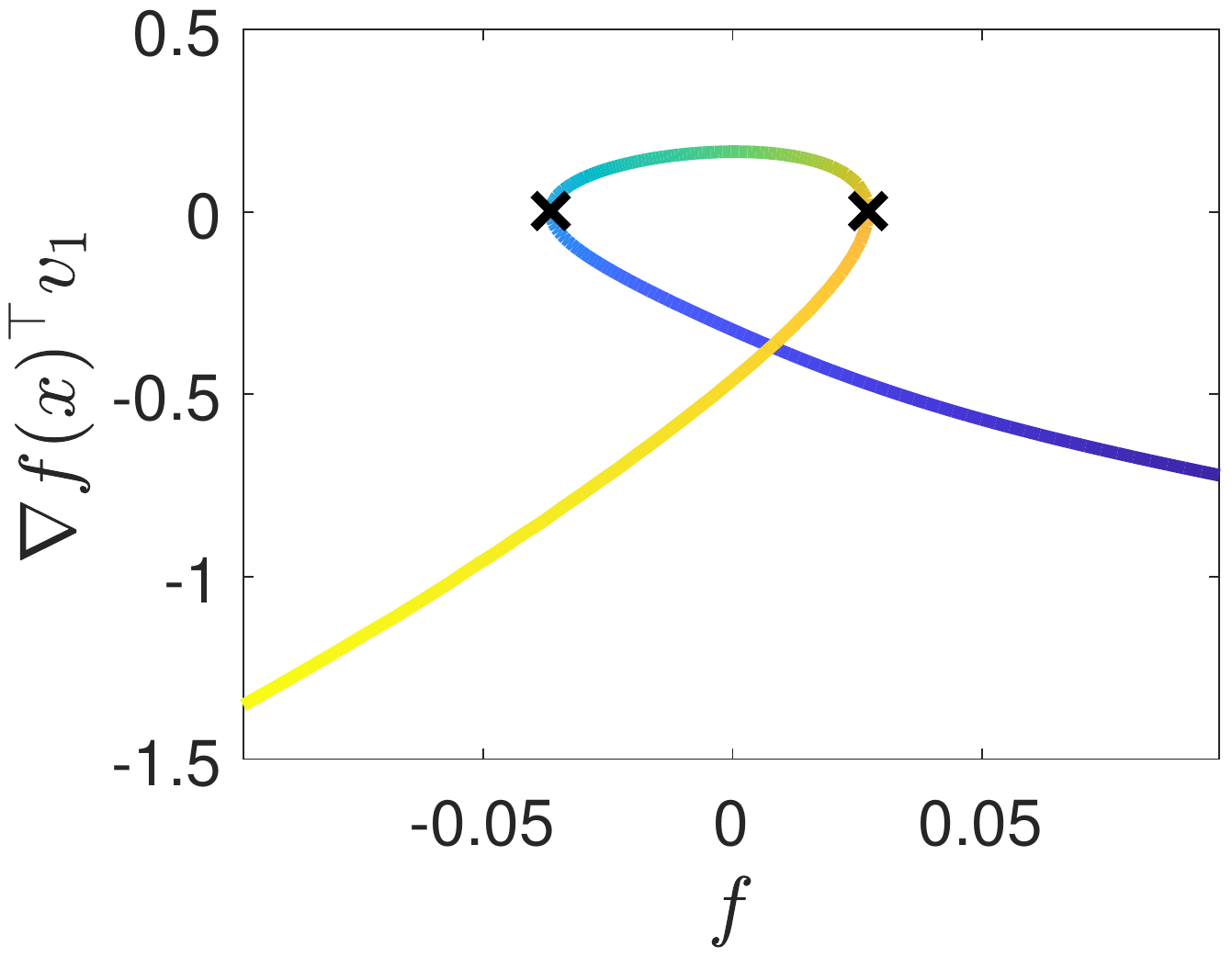} \\ (d)}
	\caption{(a) $\Pc$ for problem \eqref{eq:example1}. (b)  Part of $\Pc$ where the turning points $x^1$ and $x^2$ occur. (c) Image of $\Pc$ with the same coloring as in (b). (d) Derivative of $f$ in the direction of the predictor, i.e., $\nabla f(x)^\top v_1$.}
	\label{fig:example1}
\end{figure}

We thus use a criterion which stems from classical continuation for dynamical systems in \cite{Allgower1990}. It is based on the sign of the determinant of the augmented jacobian $\begin{pmatrix} H'(h^{2}(x^0),\alpha)^\top, \bar{v} \end{pmatrix}^\top$, which is here given by
\begin{align}
J_a := \left(  \begin{matrix}                          
\alpha_1 \nabla^2 f(x^0)_{|A(x^0)} & \nabla f(x^0)_{|A(x^0)} & s \\                           0              & 1             & 1 \\
\bar{v}_1^\top & \bar{v}_2 & \bar{v}_3\\
\end{matrix}\right).
\end{align}
If the sign of $\det{(J_a)}$ is constant in one activation structure, then this ensures that $\bar{v}$ has the same orientation with respect to the image of $H'(h^2(x^0),\alpha)$. Furthermore, it prevents us from walking back in the direction we came from (cf.~\cite{Allgower1990}).
As we will see in Section~\ref{subsec:PC_act_deact}, the correct direction is known at the beginning of a new activation structure.
Therefore, this is a suitable criterion to choose the direction in each predictor step. 
Furthermore, a simple calculation shows that
\begin{equation}\label{eq:signPredictor}
\begin{aligned}
\det{(J_a)} &= -\bar{v}_2 \cdot z \cdot \det{(\alpha_1 \nabla^2 f(x^0)_{|A(x^0)})}, 
\end{aligned}
\end{equation}
where $z :=  2 + (1+\abs{(\nabla f(x^0))_a})^4 \norm{(\nabla^2 f(x^0)_{|A(x^0)})^{-1}s}_2^2 > 0$ with $a \in A(x^0)$. As a result, only the determinant of $\nabla^2 f(x^0)_{|A(x^0)}$ has to be computed in order to determine the sign of $\det{(J_a)}$.

\subsubsection{Step size}
Now it remains to choose a step size $h \in \R^{>0}$ that determines how far we move along the tangent direction, i.e., that determines the predicted point $x^p = x^0 + h \cdot (\pm v_1)$. 
To obtain an even coverage of the Pareto critical set, we choose a constant step size in the parameter space, i.e.,
\begin{align}\label{eq:predictorStepSize}
h := \frac{\tau}{\norm{v_1}_{2}}
\end{align}
with a constant $\tau \in \R^{>0}$.
For an even distribution in the objective space, see \cite{Hillermeier2001, ParetoTracer}.

Since we still assume that we are in a part of the Pareto critical set where the activation structure and sign of the variables does not change, the issue may occur that $h$ is too large if we choose it according to one of the above formulas. Therefore, we will choose $\hat{h} = \min(h, \bar{h})$,
where $\bar{h}$ determines the maximal length of $v_1$, such that the activation structure and sign does not change, i.e., such that no active index gets inactive or crosses $0$.

\subsection{Corrector}
\label{subsec:corrector}

The corrector step maps the predicted point onto the Pareto critical set by finding the nearest point $x^c \in \Pc$ with the same zero structure and signs as the predicted point $x^p$, i.e., with $A(x^c) = A(x^p) = A(x^0)$. The standard approach is to solve the system of equations given by the KKT conditions~\eqref{eq:KKT_l1} with $x^p$ as starting point. Since the subgradient of the $\ell_1$-norm consists of $M(x^0) = 2^{n^0(x^0)}$ vectors, this can result in very large systems. Fortunately, we can exploit Theorem~\ref{th:KKT_cond} to obtain a simpler way,
and solve the following optimization problem:
\begin{equation}
\tag{C}
\begin{aligned}
\min_{x \in \R^n} &\norm{x - x^p}_2^2, \quad s.t. \\
&(\nabla f(x))_j^2 - (\nabla f(x))_a^2 = 0 &&\text{ if } x^0_j \neq 0\\
&(\nabla f(x))_j^2 - (\nabla f(x))_a^2 \le 0 &&\text{ if } x^0_j = 0\\
&(\nabla f(x))_j \le 0 &&\text{ if } x^0_j > 0\\
&(\nabla f(x))_j \ge 0 &&\text{ if } x^0_j < 0\\
&x_j \ge 0 &&\text{ if } x^0_j > 0\\
&x_j \le 0 &&\text{ if } x^0_j < 0\\
&x_j = 0 &&\text{ if } x^0_j = 0
\end{aligned}
\label{eq:corrector}
\end{equation}
where $a \in A(x^0)$ is some index that is currently active.

In practice, we use an SQP method \cite{Nocedal2006} for the solution of \eqref{eq:corrector} with $x^p$ as the initial point.
In the case of convergence issues, one can choose a starting point closer to the Pareto critical point $x^0$, i.e., $x^p + \lambda (x^0 - x^p)$ for some $\lambda \in (0,1)$.

\subsection{Changing the activation structure}
\label{subsec:PC_act_deact}

So far, we have seen how the predictor and corrector can be computed when the zero structure is locally constant around $x^0$. As discussed at the beginning of this section, this allows us to compute the smooth parts of the Pareto critical set in between the kinks. To compute the entire Pareto critical set (or a connected component of it), a mechanism that is able to change the activation structure $A(x)$ is required. 

According to Theorem~\ref{th:KKT_cond}, an index $j$ can only change from inactive to active (or vice versa) if $x_j= 0$ and $\abs{\nabla f(x)_j} = \abs{\nabla f(x)_a}$ with $a \in A(x)$. We will call such an index $j$ \emph{potentially active} and denote the set of all potentially active indices by 
\begin{equation}\label{eq:defAp}
\begin{aligned}
    A_p(x) = \{j \in \{1,\dots,n\} ~|~ &x_j = 0 \mbox{~and}\\ &\abs{\nabla f(x)_j} = \abs{\nabla f(x)_a}\}.
\end{aligned}
\end{equation}

\subsubsection{Activate indices}
\label{subsubsec:activate}
Let $x^*$ be a Pareto critical point with $A_p(x^*) \neq \emptyset$. If $x^*$ is an end point of the current activation structure, then the construction of the corrector ensures that we reach it after a finite number of steps (if the step length $\tau$ is sufficiently small). If $x^*$ is not an end point, then we will generally not be able to detect it during continuation, as we move with a constant step size. Fortunately, the latter points are less likely to exist, as will be discussed briefly at the end of this section. 

Note that not all potentially active indices necessarily need to be activated, such that a way to identify the relevant indices is required.
Denoting the number of potentially active indices with $p$, we have $2^p$ possible activation structures. 
For every possible activation structure $A^l_p \subset A_p(x^*)$, $l = 1,\dots,2^p$, we can compute a vector $\bar{v}^l = (\bar{v}_1^l,\bar{v}_2^l,\bar{v}_3^l)$ as in the predictor step (Section~\ref{subsec:predictor}). If we assume $\abs{\nabla f(x^*)_a} > 0 $, then according to 2(a) in Theorem \ref{th:KKT_cond}, the correct sign of the possibly active indices is given by $-\sgn(\nabla f(x^*))_j$. Therefore, $\bar{v}^l$ is given by

\begin{equation}
\ker\left(  \begin{matrix}                           
\alpha_1\nabla^2 f(x^*)_{|A(x^*) \cup A^l_{p}} & \nabla f(x^*)_{|A(x^*) \cup A^l_{p}} & s^l \\                           0              & 1             & 1 \\
\end{matrix}\right), \label{eq:predictor_ker_PA}
\end{equation}
with $s^l = -\sgn(\nabla f(x^*))_{|A(x^*) \cup A^l_{p}}$.
If the Hessian $\nabla^2 f(x^*)_{|A(x^*) \cup A^l_{p}}$ is regular, we obtain
\begin{equation}\label{eq:predictor_v_PA}
\begin{aligned}
\bar{v}_1^l &= (1 + \abs{(\nabla f(x^*))_a})^2 \left(\nabla^2 f(x^*)_{|A(x^*) \cup A^l_{p}}\right)^{-1} s^l,\\
\bar{v}_2^l &= 1 , \\
\bar{v}_3^l &= -1.
\end{aligned}
\end{equation}
Analogously to Section~\ref{subsec:predictor}, we obtain the direction $v_1^l \in  \R^n$ by the corresponding embedding $h^1:\R^{n^A(x^*)+\abs{A_p^l}} \rightarrow \R^n$.
If $A^l_p$ is a set of potentially active indices that can actually be activated, i.e., if there is a curve $\gamma$ of Pareto critical points starting in $x^*$ with the active set $A(x^*) \cup A^l_p$, then by construction, $v_1^l$ is the tangent vector of that curve. By analyzing the relationship between the derivative of $\gamma$ at $x^*$ and the conditions in Theorem \ref{th:KKT_cond}, we obtain necessary conditions for $A^l_p$ to be activated. This is done in the following lemma and the subsequent corollary.

\begin{lemma}\label{lemma:PA}
	Let $x^* \in P_c \setminus \{ 0 \}$ with $\nabla f(x^*) \neq 0$. Let $\gamma : (-1,1) \rightarrow \R^n$ such that
	\begin{itemize}
		\item[(i)] $\gamma$ is continuously differentiable,
		\item[(ii)] $\gamma(t) \in P_c \quad \forall t \in [0,1)$, 
		\item[(iii)] $\gamma(0) = x^*$,
		\item[(iv)] $A(\gamma(t)) =: A_\gamma$ is constant $\forall t \in (0,1)$,
	\end{itemize}
	Let $a \in A(x^*)$ and $w = \gamma'(0)$. Then for all $j \in A_\gamma \setminus A(x^*)$ we have
	\begin{enumerate}[label=(\alph*)]
		\item either $w_j = 0$ or $\sgn(w_j) = - \sgn((\nabla f(x^*))_j)$,
	\end{enumerate}
	and for all $j \in A_p(x^*)\setminus A_\gamma$ we get
	\begin{enumerate}[resume, label=(\alph*)]
		\item \quad\,$\sgn((\nabla f(x^*))_j) (\nabla^2 f(x^*))_j w$ \\ $\le \sgn((\nabla f(x^*))_a) (\nabla^2 f(x^*))_a w$,
	\end{enumerate}
	where $(\nabla^2 f(x^*))_j$ is the $j$-th row of $\nabla^2 f(x^*)$.
\end{lemma}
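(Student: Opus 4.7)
The plan is to translate each of the two sign/inequality constraints in the KKT characterization (Theorem~\ref{th:KKT_cond}) into an infinitesimal condition at $t=0$ by plugging $\gamma(t)$ into it and either Taylor-expanding or differentiating at $t=0^+$. Before doing anything, I would record one consequence of the hypothesis $\nabla f(x^*)\ne 0$: by 2(a)(ii) applied at $x^*$, the common value $|\nabla f(x^*)_a|$ is the same for every $a\in A(x^*)$, and if it were zero then 2(b) would force the whole gradient to vanish. Hence $|\nabla f(x^*)_a|>0$, and by the defining equality in \eqref{eq:defAp} we also get $|\nabla f(x^*)_j|>0$ for every $j\in A_p(x^*)$. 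This guarantees that the sign function is continuous at the relevant components of $\nabla f(x^*)$ and that $t\mapsto |\nabla f(\gamma(t))_j|$ is differentiable at $t=0$.

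For part (a), let $j\in A_\gamma\setminus A(x^*)$, so $\gamma_j(0)=x^*_j=0$ while $\gamma_j(t)\ne 0$ for $t\in(0,1)$. Applying 2(a)(i) at $\gamma(t)$ yields $\sgn(\gamma_j(t))=-\sgn(\nabla f(\gamma(t))_j)$ for every $t\in(0,1)$. Since $\nabla f(x^*)_j\ne 0$ (either because $j\in A_p(x^*)$, or because $j\in A_\gamma\setminus A(x^*)\subseteq A_p(x^*)$ by a limiting argument from 2(a)(ii) on $\gamma(t)$), continuity gives $\sgn(\nabla f(\gamma(t))_j)=\sgn(\nabla f(x^*)_j)$ for sufficiently small $t>0$. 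Using the Taylor expansion $\gamma_j(t)=tw_j+o(t)$, if $w_j\ne 0$ then $\sgn(\gamma_j(t))=\sgn(w_j)$ for small $t>0$, which combines with the previous identity to yield $\sgn(w_j)=-\sgn(\nabla f(x^*)_j)$, as required.

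For part (b), let $j\in A_p(x^*)\setminus A_\gamma$. Then $\gamma_j(t)=0$ throughout $(0,1)$, and $\gamma(t)$ is Pareto critical, so 2(b) of Theorem~\ref{th:KKT_cond} applied at $\gamma(t)$ gives
\begin{equation*}
\phi(t):=|\nabla f(\gamma(t))_a|-|\nabla f(\gamma(t))_j|\ge 0\qquad\forall t\in(0,1).
\end{equation*}
By definition of $A_p(x^*)$ we have $\phi(0)=0$. Because $\nabla f(x^*)_a\ne 0$ and $\nabla f(x^*)_j\ne 0$, the map $\phi$ is differentiable at $t=0$ with
\begin{equation*}
\phi'(0)=\sgn(\nabla f(x^*)_a)(\nabla^2 f(x^*))_a w-\sgn(\nabla f(x^*)_j)(\nabla^2 f(x^*))_j w,
\end{equation*}
via the chain rule. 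Since $\phi(0)=0$ is a minimum of $\phi$ on $[0,1)$, the one-sided derivative satisfies $\phi'(0)\ge 0$, which is exactly the claimed inequality (b) after rearrangement.

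The only subtle point is the application of 2(a)(i) along $\gamma$ in part (a): one must know that $\nabla f(x^*)_j\ne 0$ for the newly activated index $j$ so that its sign is stable under the limit $t\to 0^+$. This is handled by the preparatory observation that every index in $A_\gamma\setminus A(x^*)$ must lie in $A_p(x^*)$ (otherwise 2(a)(ii) applied to $\gamma(t)$ would contradict continuity of $\nabla f$ at $x^*$ combined with 2(b) at $x^*$), and indices in $A_p(x^*)$ have nonzero gradient entries by the earlier argument. Once this is in place, both parts reduce to standard one-sided derivative/sign arguments.
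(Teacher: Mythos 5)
Your proof is correct and follows essentially the route the paper itself indicates (its proof is only in the supplementary material, but the text preceding the lemma describes exactly this argument: relate $w=\gamma'(0)$ to the conditions of Theorem~\ref{th:KKT_cond} along $\gamma$). Your key steps -- observing $\abs{(\nabla f(x^*))_a}>0$ and $A_\gamma\setminus A(x^*)\subseteq A_p(x^*)$ so that signs are stable, deriving (a) from 2(a)(i) along $\gamma(t)$ plus the first-order expansion $\gamma_j(t)=t\,w_j+o(t)$, and deriving (b) by differentiating $\phi(t)=\abs{\nabla f(\gamma(t))_a}-\abs{\nabla f(\gamma(t))_j}\ge 0$ at its minimizer $t=0$ -- are sound and complete.
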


\begin{proof}
See Section C in the supplementary material.
\end{proof}

Unfortunately, without knowledge of the Pareto critical set, the existence of the curve $\gamma$ in the previous lemma is almost impossible to verify in practice. 
Recall that Corollary \ref{cor:MOP_active} indicates that $\Pc$ is piecewise smooth, where parts in between kinks are embeddings of Pareto critical sets of the smooth problems \eqref{eq:MOP_l1_active} into $\R^n$. 
Thus, the existence of $\gamma_{|(0,1)}$ follows from the smooth structure of \eqref{eq:MOP_l1_active}. 
In light of this, the existence of $\gamma$ implies a regularity of the smooth parts when they reach a kink. 
For example, roughly speaking, continuous differentiability of $\gamma$ on an open neighborhood of $t = 0$ implies that the ``curvature'' of $\Pc$ is bounded.

Note that the statement of the previous lemma only involves the derivative $w = \gamma'(0)$ and the new active set $A_\gamma$. 
By the construction in \eqref{eq:predictor_v_PA}, $w$ is (up to scaling) equal to the direction $v_1^l$ for the index set $A^l_p = A_\gamma \setminus A(x^*)$. 
Although $A_\gamma$ is not known in practice, we can use this relation to identify the indices that need to be activated, as shown in the following corollary.

\begin{corollary}\label{cor:PA}
	Let $x^* \in P_c \setminus \{ 0 \}$ with $\nabla f(x^*) \neq 0$, let $\gamma$ be a curve as in Lemma \ref{lemma:PA} and let $a \in A(x^*)$.\\
	Let $v^l_1$ be the vector computed via \eqref{eq:predictor_v_PA} and the appropriate embedding $h^1$ corresponding to the index set $A^l_p = A_\gamma \setminus A(x^*)$. Then, for $w = \sigma v_1^l$ with $\sigma \in \{-1,+1\}$,
	\begin{enumerate}[label=(\alph*)]
		\item  \label{item:PA_1}
		either $w_j = 0$ or $\sgn(w_j) = - \sgn((\nabla f(x^*))_j)$,
	\end{enumerate}
	and for all $j \in A_p(x^*)\setminus A_\gamma$,
	\begin{enumerate}[resume,label=(\alph*)]
		\item $\sgn((\nabla f(x^*))_j) (\nabla^2 f(x^*))_j w \le -\sigma (1+\abs{(\nabla f(x))_a})^2$,
	\end{enumerate}
	where $(\nabla^2 f(x^*))_j$ is the $j$-th row of $\nabla^2 f(x^*)$.
\end{corollary}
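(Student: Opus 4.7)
The plan is to deduce the two claims of the corollary from Lemma~\ref{lemma:PA} by identifying the abstract derivative $\gamma'(0)$ with the explicit predictor direction $v_1^l$ up to a nonzero scalar, and then absorbing the sign of that scalar into $\sigma$.

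First I would establish that $\gamma'(0)$ is a scalar multiple of $v_1^l$. Since $\gamma$ is continuous with $A(\gamma(t)) = A_\gamma$ constant on $(0,1)$ and $\gamma(0) = x^*$, one has $A_\gamma \supseteq A(x^*)$, so $A(x^*) \cup A_p^l = A_\gamma$. Inside the open orthant determined by the signs $s^l = -\sgn(\nabla f(x^*))_{|A_\gamma}$ the $\ell_1$-norm is linear, so the reduced problem \eqref{eq:MOP_l1_active} associated to $A_\gamma$ is smooth; under the regularity of $\nabla^2 f(x^*)_{|A_\gamma}$ tacitly assumed in \eqref{eq:predictor_v_PA} its KKT system cuts out a one-dimensional smooth curve through $x^*$ whose tangent line is spanned by the embedding of $\bar v_1^l$. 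Since $\gamma$ is $C^1$ on $(-1,1)$ and traces this curve for $t>0$, the derivative $w' := \gamma'(0)$ satisfies $w' = c\, v_1^l$ for some $c \in \R \setminus \{0\}$.

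Setting $\sigma := \sgn(c) \in \{-1,+1\}$ and $w := \sigma v_1^l$, we have $w = |c|^{-1} w'$, i.e., $w$ is a positive rescaling of $\gamma'(0)$. Condition (a) is a sign condition on individual components and is preserved under positive scaling, so it follows directly from Lemma~\ref{lemma:PA}(a). Condition (b) is a linear inequality in $w$; dividing Lemma~\ref{lemma:PA}(b) through by $|c|>0$ yields
\[
\sgn((\nabla f(x^*))_j)(\nabla^2 f(x^*))_j w \le \sgn((\nabla f(x^*))_a)(\nabla^2 f(x^*))_a w.
\]
It then remains to evaluate the right-hand side using the explicit formula for $v_1^l$. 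From \eqref{eq:predictor_v_PA} we have $\nabla^2 f(x^*)_{|A_\gamma}\, v_1^l = (1+|(\nabla f(x^*))_a|)^2\, s^l$, and since $v_1^l$ vanishes outside $A_\gamma$ after embedding, the $a$-th row of the full Hessian gives
\[
(\nabla^2 f(x^*))_a v_1^l = -(1+|(\nabla f(x^*))_a|)^2\, \sgn((\nabla f(x^*))_a).
\]
Multiplying by $\sgn((\nabla f(x^*))_a)$ and then by $\sigma$ yields exactly the stated bound $-\sigma(1+|(\nabla f(x^*))_a|)^2$.

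The main obstacle is the first step, that is, justifying that $\gamma'(0)$ lies in the one-dimensional tangent line spanned by $v_1^l$. Because $x^*$ sits on the boundary of the relevant orthant (its $A_p^l$-components are zero), one cannot invoke Corollary~\ref{cor:MOP_active} directly at $x^*$; instead one has to argue that the smooth KKT locus of the reduced problem extends through $x^*$ and that the $C^1$ curve $\gamma$ must lie on this extension, so its derivative at $0$ belongs to the tangent line. Once this identification is established, the remaining steps are purely algebraic consequences of \eqref{eq:predictor_v_PA}.
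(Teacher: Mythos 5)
Your proposal is correct and follows essentially the same route as the paper: identify $\gamma'(0)$ with $v_1^l$ up to a nonzero scalar (the paper asserts exactly this identification just before the corollary, relying on the same tacit regularity of the reduced Hessian in \eqref{eq:predictor_v_PA}), absorb the scalar's sign into $\sigma$, and then obtain (a) and (b) from Lemma~\ref{lemma:PA} by positive-scaling invariance together with the algebraic evaluation $(\nabla^2 f(x^*))_a v_1^l = -(1+\abs{(\nabla f(x^*))_a})^2\,\sgn((\nabla f(x^*))_a)$. The only point glossed over on both sides is the degenerate possibility $\gamma'(0)=0$, which the paper likewise excludes implicitly when calling $v_1^l$ ``the tangent vector of that curve''.
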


\begin{remark}\label{rem:PA_directions}
	According to (a) in Corollary~\ref{cor:PA}, for all possible activation structures $A_p^l(x^*) \neq \emptyset$, there either exists only one valid direction ($+v_1^l$ or $-v^l_1$) or the tangent vector $w$ of the corresponding curve $\gamma$ has to satisfy
	$w_j = 0$ for all $j \in A_p(x^*)$, i.e., $\gamma$ has to be tangential to a vector with activation structure $A(x^*)$, which is unlikely.
\end{remark}

Using the conditions from Corollary~\ref{cor:PA}, we now obtain a predictor for the suitable activation structures with the correct sign. Since we can exclude the direction we are coming from, we have to follow each of the remaining directions. This is shown in Examples~\ref{ex:PA_1} and \ref{ex:PA_2}.

\begin{example}\label{ex:PA_1}
	Solve \eqref{eq:MOP_l1} with   
	\begin{align}\label{eq:example2}
	f(x) = (x_1 - 2)^2 + (x_2 - 1)^2 + (x_3 - 1)^2.
	\end{align}
	Figure~\ref{fig:example2} shows the Pareto critical set for this problem. Consider $x^* = (1,0,0)^\top$. Since $\nabla f(x^*) = (-2,-2,-2)^\top$, we have $A_p(x^*) = \{2,3\}$ (cf.~\eqref{eq:defAp}). According to \eqref{eq:predictor_v_PA}, we get the following possible directions:
	\begin{align*}
	v_1^0 = \begin{pmatrix} \frac{9}{2} \\[0.3em] 0 \\[0.3em] 0 \end{pmatrix}, 
	v_1^1=\begin{pmatrix} \frac{9}{2} \\[0.3em] \frac{9}{2} \\[0.3em] 0 \end{pmatrix},
	v_1^2=\begin{pmatrix} \frac{9}{2} \\[0.3em] 0 \\[0.3em] \frac{9}{2} \end{pmatrix},
	v_1^3=\begin{pmatrix} \frac{9}{4} \\[0.3em] \frac{9}{4} \\[0.3em] \frac{9}{4}\end{pmatrix}
	\end{align*}
	After checking condition (a) in Corollary~\ref{cor:PA}, we know that only
	\begin{align*}
	v_1^0, -v_1^0, v_1^1, v_1^2 \text{ and } v_1^3
	\end{align*}
	remain as suitable directions. By checking condition (b), the set of admissible directions can be reduced to 
	\begin{align*}
	-v_1^0 \text{ and } v_1^3.
	\end{align*}
	Assuming that we started in the origin, we have to walk in direction $v_1^3$, i.e., both $j = 2$ and $j=3$ become active (and positive).
\end{example}

\begin{example}\label{ex:PA_2}
	Solve \eqref{eq:MOP_l1} with 
	\begin{align}\label{eq:example3}
	f(x) = (x_1 - 2)^2 + (x_2 - 1)^2 + 2 x_1 x_3.
	\end{align}
	Figure~\ref{fig:example3} shows the relevant part of $\Pc$ for this problem. Consider $x^* = (1,0,0)^\top$. Since $\nabla f(x^*) = (-2, -2, 2)^\top$, we have $A_p(x^*) = \{2,3\}$. According to \eqref{eq:predictor_v_PA}, we get as possible directions:
	\begin{align*}
	v_1^0 = \begin{pmatrix} \frac{9}{2} \\[0.3em] 0 \\[0.3em] 0 \end{pmatrix}, 
	v_1^1=\begin{pmatrix} \frac{9}{2} \\[0.3em] \frac{9}{2} \\[0.3em] 0 \end{pmatrix},
	v_1^2=\begin{pmatrix} -\frac{9}{2} \\[0.3em] 0 \\[0.3em] 9 \end{pmatrix},
	v_1^3=\begin{pmatrix} -\frac{9}{2} \\[0.3em] \frac{9}{2} \\[0.3em] 9 \end{pmatrix}
	\end{align*}
	Verification of conditions (a) and (b) in Corollary~\ref{cor:PA} reduces the set of admissible directions to $- v_1^0$, $v_1^1$  and $v_1^2$.
	Assuming that we started in the origin, we have to (try to) advance with the continuation into the directions $v_1^1$ and $-v_1^2$. A predictor step along $v_1^1$ directly terminates since the corrector delivers $x^*$ as the nearest Pareto critical point and therefore, the remaining direction is $-v_1^2$. Thus, only $j = 3$ is activated.
\end{example}

\begin{figure}[h!]
	\centering
	\begin{subfigure}{0.24\textwidth}
		\includegraphics[width=\textwidth]{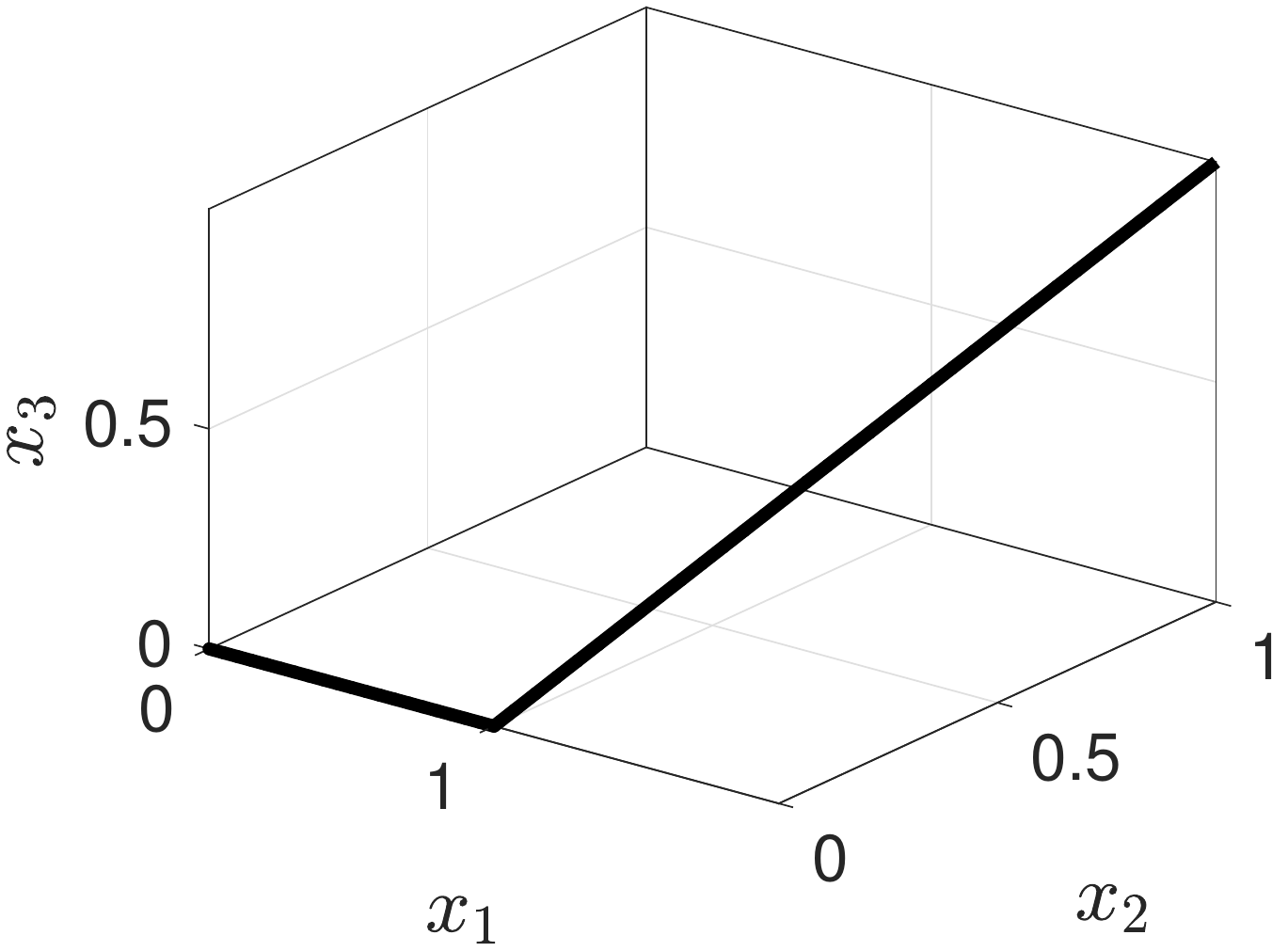}
		\caption{}\label{fig:example2}
	\end{subfigure}
	\hfill
	\begin{subfigure}{0.24\textwidth}
		\includegraphics[width=\textwidth]{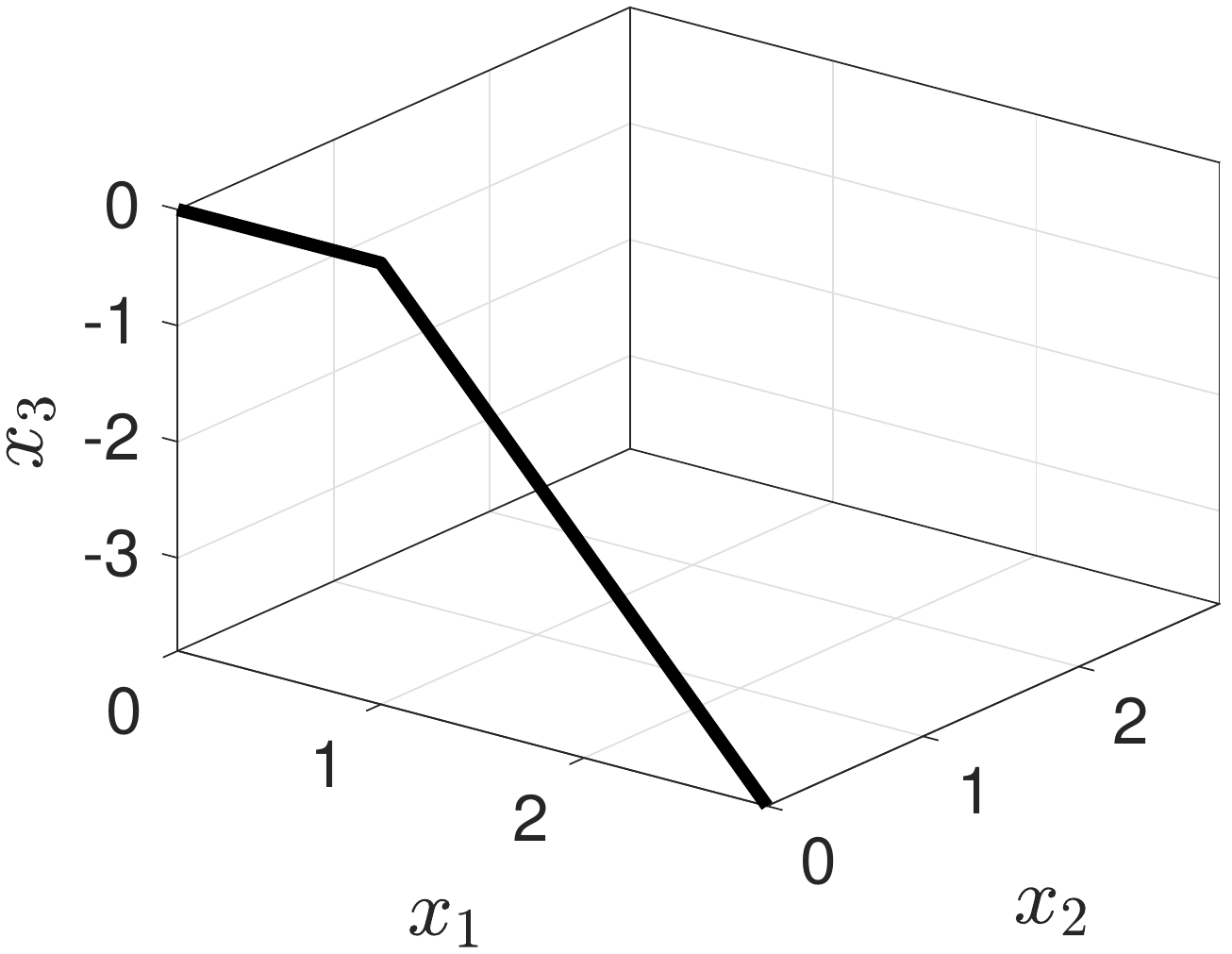}
		\caption{}\label{fig:example3}
	\end{subfigure}
	\caption{(a) $\Pc$ for problem \eqref{eq:example2}. (b)  Part of $\Pc$ of problem \eqref{eq:example3}.}
	\label{fig:example2_3}
\end{figure}

In practice, often only two possible directions remain after applying Corollary~\ref{cor:PA} -- the old and new direction. Nevertheless, it can occur that the Pareto set splits, i.e., that there are at least 3 suitable directions. 
In this case, the continuation method needs to be advanced in each direction individually. An example where this is demonstrated can be found in Section A of the supplementary material.
If no direction satisfies the conditions in Corollary~\ref{cor:PA}, we have reached the end of a connected component of $\Pc$.
In the smooth parts of the Pareto critical set with constant activation structure, a connected component can only end if $\nabla f(x) = 0$, i.e., a local minimizer for the main objective has been found \cite{GPD2019}.

\begin{remark}\label{rem:PA_grad_zero}
	Although the requirements of Corollary~\ref{cor:PA} do not hold if $\nabla f(x) = 0$, similar conditions can be obtained for this case. First, note that $\nabla f(x) = 0$ implies that $A_p(x^*)$ is the set of all inactive indices. Since we cannot predict the sign of $x_j$ with $j \in A_p(x^*)$ via the sign of the gradient, we have to consider every possible combination of the signs of potentially active indices for every possible activation structure $A_p^l$. The conditions of Corollary~\ref{cor:PA} can then be used as necessary conditions, where the sign of the gradient is replaced by the opposite sign of the currently considered activation and sign structure. 
	An additional condition can be obtained by using the fact that $\sgn(\nabla f(x)_j) = - \sgn(x_j)$ for all $j \in A(x)$ has to hold. 
	If the number of inactive indices is large, this results in a very large number of directions that have to be checked. 
	Therefore a more efficient approach needs to be developed in future work for very high-dimensional problems.
\end{remark}

Finally, we note that it is in theory possible that indices are activated in points besides the end points of activation structures. In other words, the Pareto critical set may split into two branches, with one branch continuing along the old structure. Since we move through smooth parts of the Pareto critical set with discrete steps, points like this cannot be detected by our method. But based on Corollary~\ref{cor:PA} and Remark~\ref{rem:PA_directions}, one can show that such points are unlikely to exist in practice.

\par 

\subsubsection{Deactivate indices}
A situation where indices have to be deactivated is when the predictor crosses zero for some entry, i.e., $\sgn(x^p_j) \neq \sgn(x^0_j)$ for some $j \in A(x^0)$.
In this situation, we reduce the predictor step length $h$ to $\bar{h}$ as the largest step size such that
\begin{align*}
x^p_j \leq 0 \ \text{if} \ x^0_j < 0 \quad\mbox{and}\quad
x^p_j \geq 0 \ \text{if} \ x^0_j > 0,
\end{align*}
for all $j \in A(x^0)$, and the indices in $A(x^0) \setminus A(x^p)$ are considered as inactive in the subsequent corrector step. If the corresponding corrector step \eqref{eq:corrector} has a solution, then the continuation method can continue on the new activation structure. Otherwise, we choose a step length in $(0,\bar{h})$ to find a new Pareto critical point with respect to the old active set $A(x^0)$. 
The likelihood of an active index being falsely deactivated by this mechanism depends on the constant step size $\tau$ and the curvature of the Pareto critical set.

A second scenario where active indices can be deactivated is the case where the corrector computes a point $x^c$ where one of the previously active indices is now zero, i.e., $x^c_i = 0$ for some $i \in A(x^0)$. This implies that some of the previously active indices $A(x^0)$ are now only potentially active. In this case, we proceed as in the activation procedure described in Section~\ref{subsubsec:activate}.

\subsection{Implementation \& Globalization}
\label{subsec:Implementation}

We now have all the ingredients to implement Algorithm~\ref{alg:PC_rough_concept}.\\
\textbf{Activation of indices:}
\begin{enumerate}
    \item If $A_p(x)$ (Eq.~\eqref{eq:defAp}) is non-empty, calculate  $v^l$ for all $A_p^l \subset A_p(x)$ using \eqref{eq:predictor_v_PA},
    \item Reduce number of directions using Corollary~\ref{cor:PA},
    \item If more than one potential direction remains, proceed in each direction. In most cases, the corrector will yield the information that $\Pc$ only continues in one direction.
\end{enumerate}
\textbf{Predictor step} (when $A_p(x)=\emptyset$)\textbf{:}
\begin{enumerate}
    \item Compute direction $v_1 = h^1(\bar{v}_1)$ using \eqref{eq:predictor_ker_v}, 
    \item Determine sign of $v_1$ using \eqref{eq:signPredictor},
    \item Compute the step size along $\pm v_1$ using \eqref{eq:predictorStepSize},
    \item Determine $x^p = x^0 + h \cdot (\pm v_1)$.
\end{enumerate}
\textbf{Deactivation of indices:} Set the indices $j$ to zero for which $\sgn(x^p_j)$ changes and reduce the predictor step length $h$ to $\bar{h}$ to avoid sign changes.\\
\textbf{Corrector step:} Compute point $x^c\in\Pc$ using \eqref{eq:corrector} with $x^p$ as the initial condition. \\

For the starting point, an intuitive choice would be $x_{\text{start}} = 0$, which is Pareto critical and even Pareto optimal as the global minimal point of the $\ell_1$-norm.
According to Theorem~\ref{th:KKT_cond}, the first index that has to be activated in $x_{\text{start}} = 0$ is the one which corresponds to the largest absolute value of the gradient. 
If there are more than one maximal entries, we can apply the strategy described in Section~\ref{subsec:PC_act_deact}.
Of course, every other Pareto critical point would also work as a starting point, from which we would have to walk in both directions of the predictor.

By construction, our method only computes the connected component of the Pareto critical set that contains the starting point $x_{\text{start}}$. Thus, if there are multiple connected components, then additional starting points are required to restart our method once the first component is computed. In practice, this can be done by considering the objective space: Due to the definition of Pareto optimality, the new starting point should have a smaller $f$ value and a larger $\ell_1$-norm or vice versa (depending on if we reached the lower end or the upper end of the image of the component). This defines an area in the objective space in which the image of the new point should lie. To actually find it, the $\varepsilon$\emph{-constraint method} \cite{Ehrgott2005} can be used, which is a solution method that can be restricted to certain parts of the objective space. Alternatively, \emph{reference point methods} \cite{Ehrgott2005} can be used, where the solution can be influenced by choosing an appropriate \emph{reference point} in the objective space. 

Finally, it is important to note that our method computes points that are Pareto \emph{critical}, but not necessarily Pareto \emph{optimal}.
To obtain the actual Pareto set, we can apply a \emph{non-dominance test} \cite{S2003}. Since the output of our method is a fine pointwise approximation of the Pareto critical set (which contains the Pareto set), this will generally result in a fine approximation of the Pareto set.

\section{Numerical Results}\label{sec:numerics}
We now study the behavior of the continuation method using three examples. We begin with two nonlinear toy examples, and then address the identification of governing equations from data via the SINDy algorithm \cite{BPK16}. Note that the latter is a regression problem and can consequently be adressed by existing homotopy approaches as well as the weighted sum method. Finally, the training of a neural network is studied.
While the cost of computing the entire Pareto set via continuation is certainly higher in comparison to calculating only a single solution, we note that (a) the individual corrector problems can be solved rather quickly due to good initial guesses and (b) the randomness (and thus, the necessity to perform multiple runs) can be reduced.

\subsection{Toy-Examples}
As a first example and to highlight the important aspects of nonlinear continuation, we consider \eqref{eq:MOP_l1} with 
\begin{equation*}
    f(x) = \left(x_1-\tfrac{1}{4}\right)^2 + \left(x_2 - \tfrac{1}{2}\right)^2 + \left(x_3 - 1\right)^4 - \tfrac{1}{2} \left(x_3 - \tfrac{1}{4}\right)^3.
\end{equation*}
Figure~\ref{fig:Toy_example} shows the result of our method. 
\begin{figure}[t!]
	\centering
	\parbox[b]{0.24\textwidth}{\centering \includegraphics[width=0.24\textwidth]{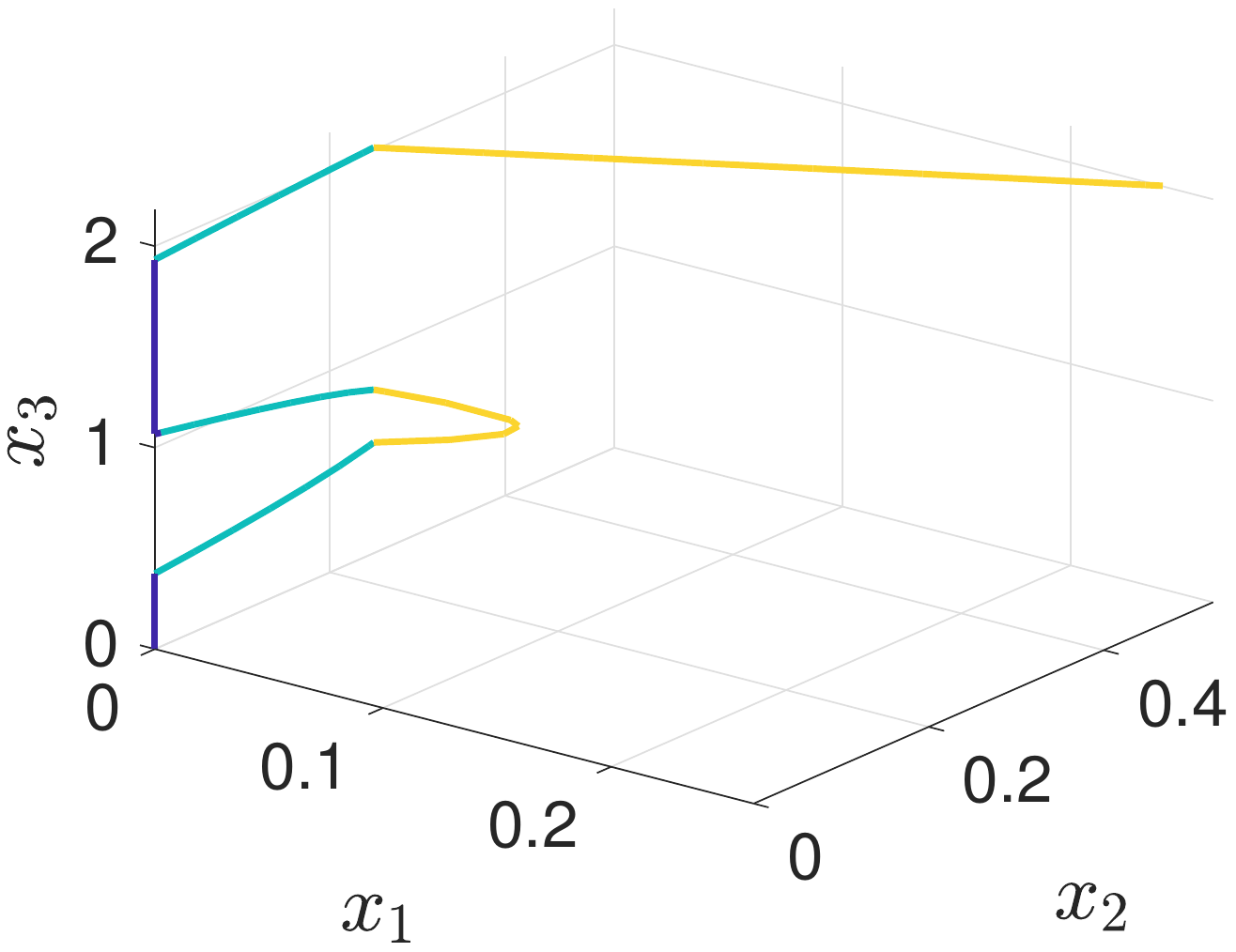} \\ (a) } \hfil
	\parbox[b]{0.24\textwidth}{\centering \includegraphics[width=0.24\textwidth]{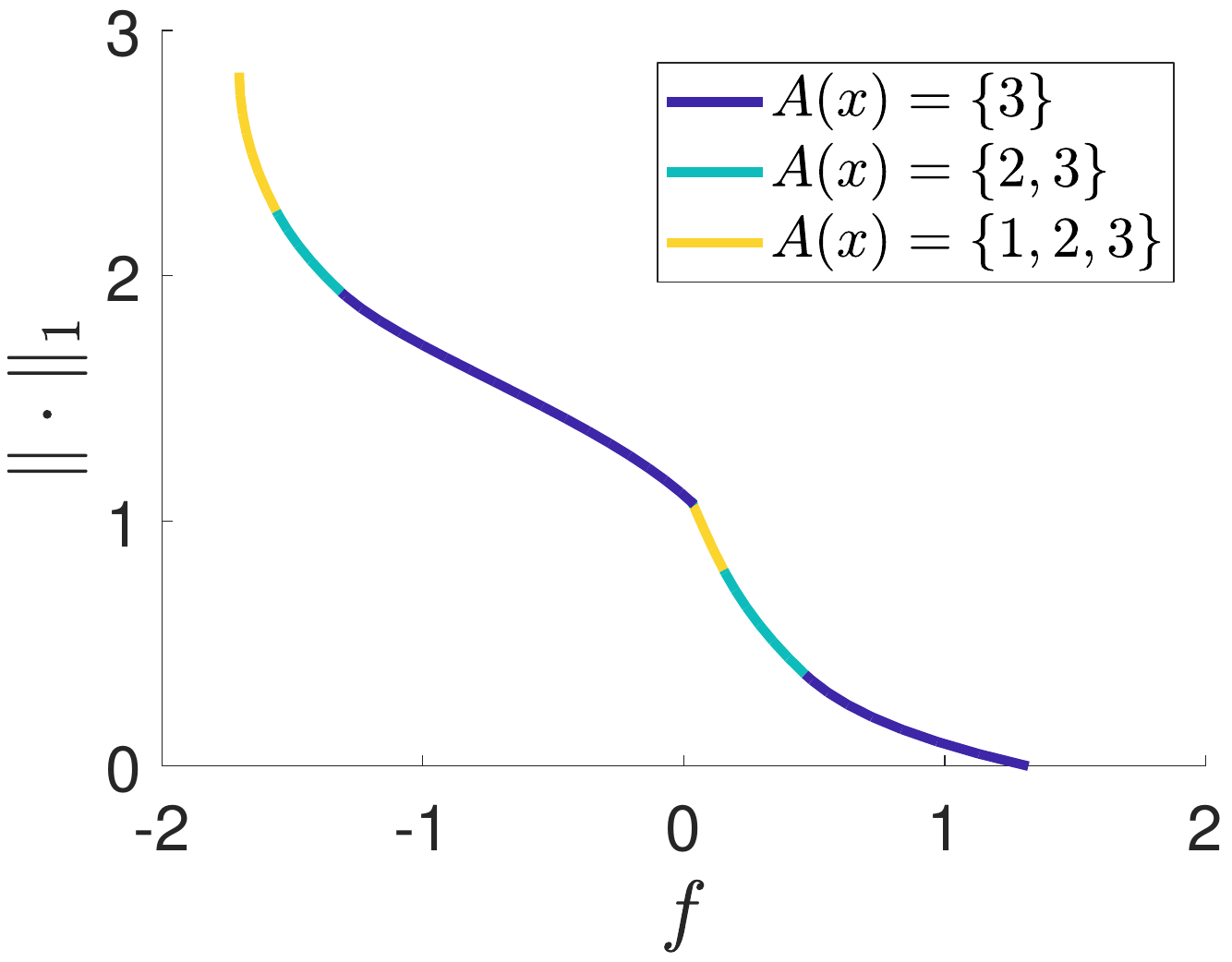} \\ (b) } \ \\
	\parbox[b]{0.24\textwidth}{\centering \includegraphics[width=0.24\textwidth]{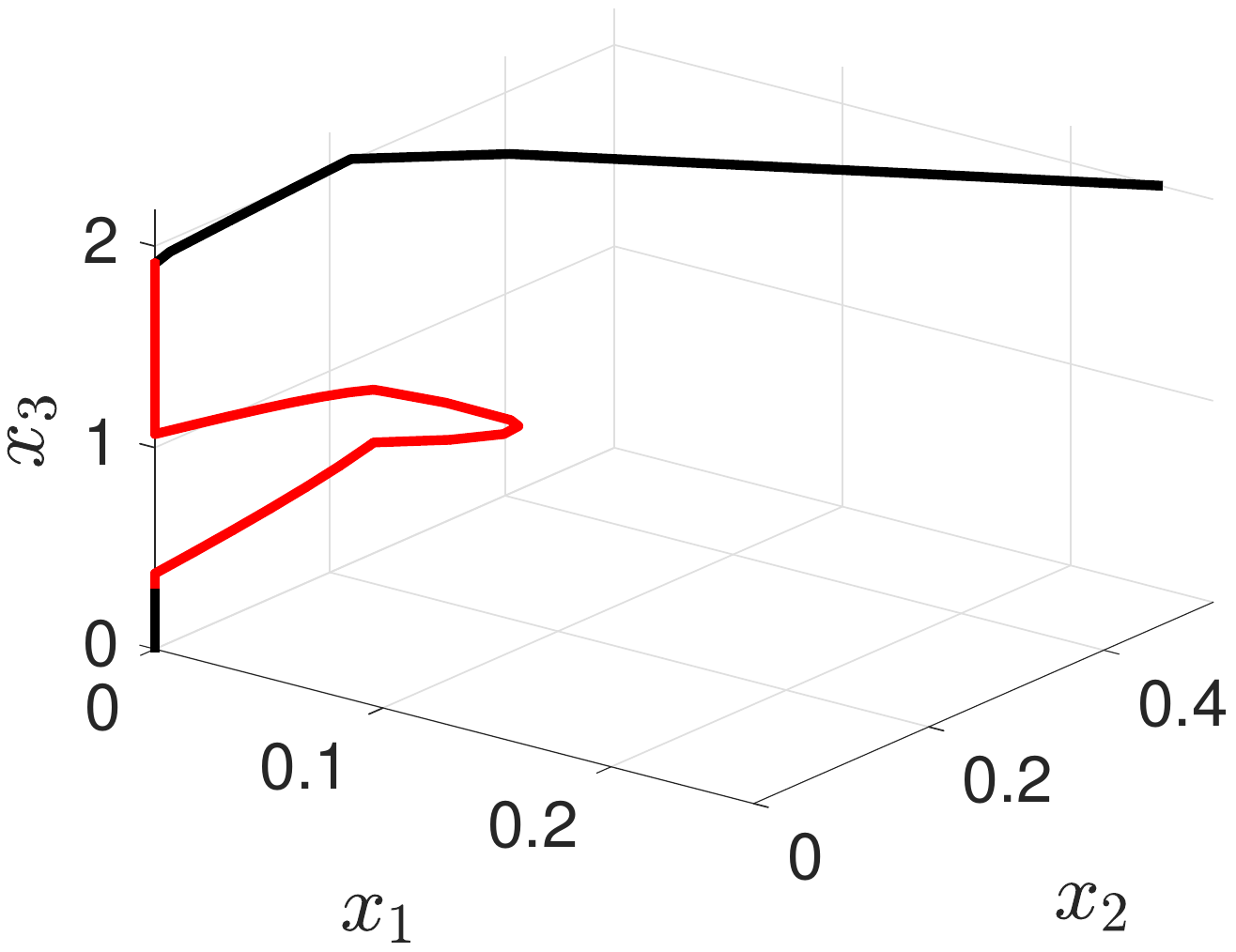} \\ (c) } \hfil
	\parbox[b]{0.24\textwidth}{\centering \includegraphics[width=0.24\textwidth]{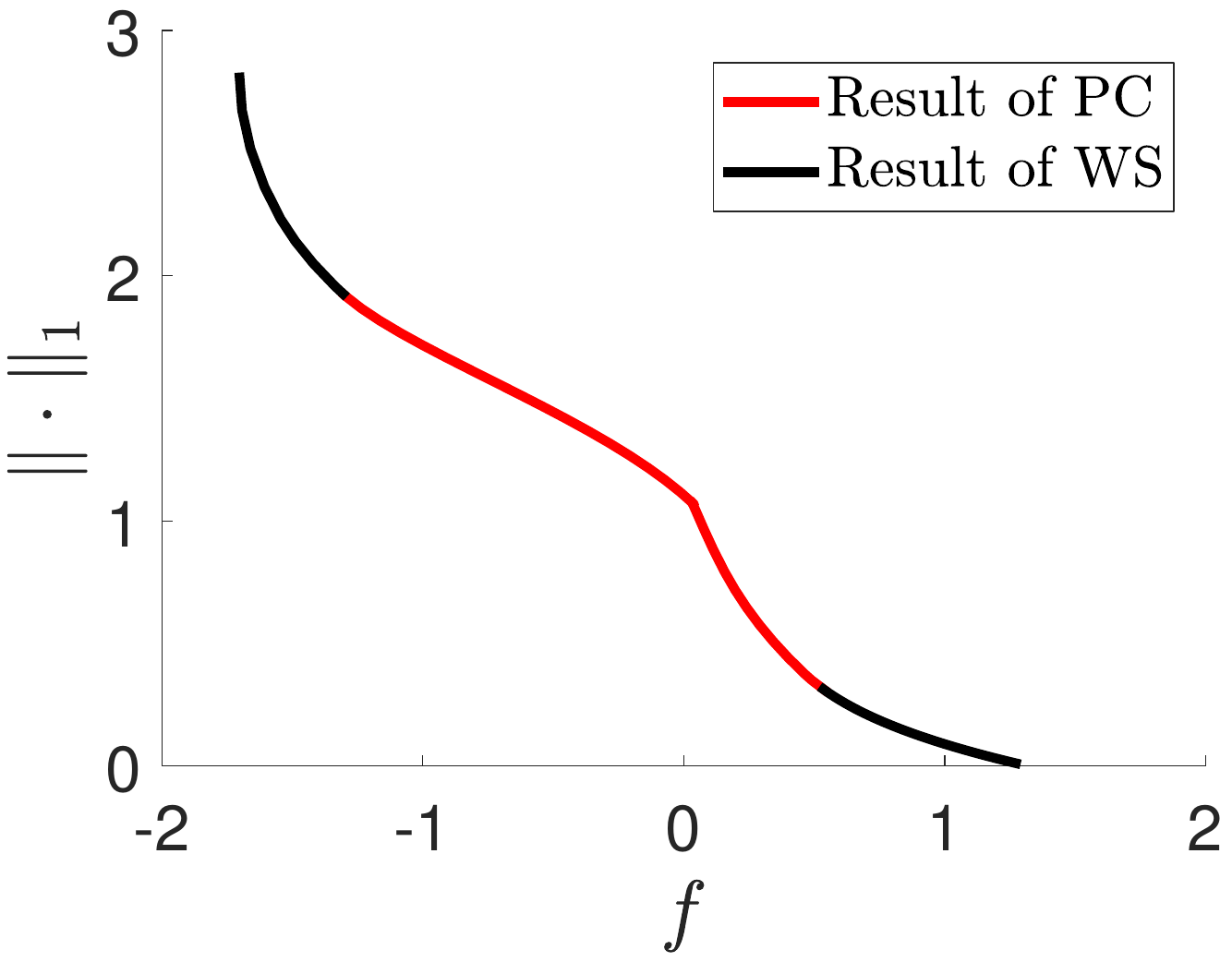} \\ (d) }
	\caption{(a) $\Pc$ for the toy example. (b) Image of $\Pc$. (c) and (d) Comparison to the weighted sum method, with which the red parts cannot be computed.}
	\label{fig:Toy_example}
\end{figure}
We observe that indices are activated and deactivated multiple times. Points where the activation structure changes are $x^1 = (0,0,0.375)$, $x^2 = (0,0.25,\frac{83}{151})$, $x^3 \approx (0,0.25,0.81)$, $x^4 \approx (0,0,1.07)$, $x^5 \approx (0,0,1.93)$ and $x^6 \approx (0,0.25,2.014)$.
In Figure~\ref{fig:Toy_example}(c) and (d), we see that only the convex part of $\Pc$ can be computed with the weighted sum method. In particular, a large part where only $j=3$ is active is missing. 

To demonstrate the feasibility in high dimensions, let
\begin{equation}\label{eq:polynom}
    f(x) = (x_1 - a_1)^4 + \sum_{i=2}^n (x_i -a_i)^2,
\end{equation}
where $a_1 = 1$ and $a_i, i =2,\dots,n$, are random values in $[-1,1]$ (similar to \cite{ParetoTracer}, Problem 4). 
Figure~\ref{fig:example_polynom} shows the result of our method for $n = 1000$ with step size $\tau=0.25$, which coincides with the analytical solution.  
\begin{figure}[h!]
    \centering
    \includegraphics[width=0.24\textwidth]{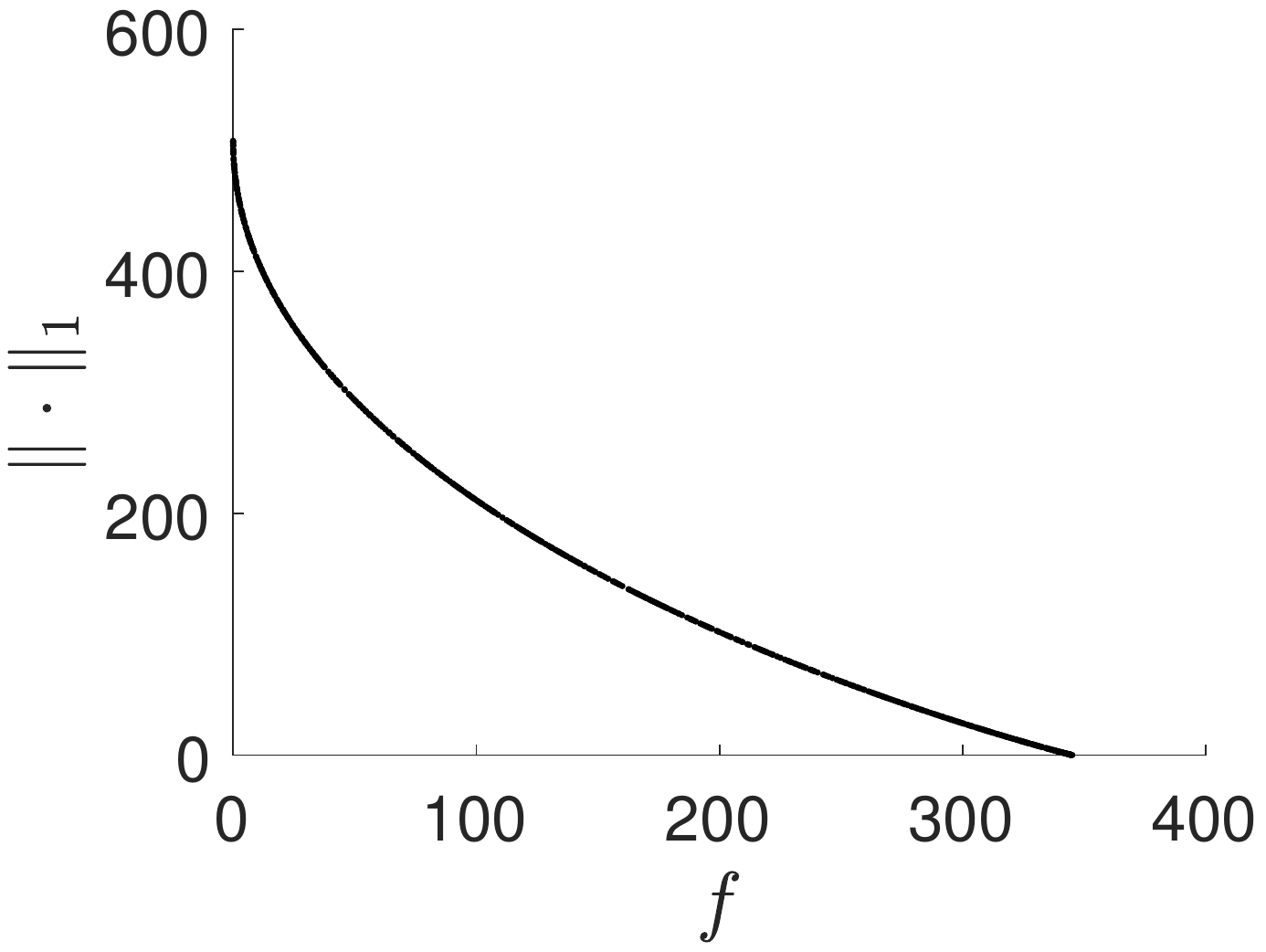}
    \caption{Pareto front for \eqref{eq:MOP_l1} with $f$ according to \eqref{eq:polynom}.}
    \label{fig:example_polynom}
\end{figure}

\subsection{SINDy}
The second example addresses the sparse identification of nonlinear dynamical systems \cite{BPK16}, where the governing system of ordinary (or partial) differential equations is sought:
\begin{equation*}
    \dot{x}(t) = g(x(t)).
\end{equation*}
Here, $x(t)\in\R^m$ is the $m$-dimensional system state at time $t$, whereas the function $g$ describing the system dynamics is unkown. The SINDy approach is then to represent $g$ via a dictionary of $c$ Ansatz functions (denoted by $\Theta(x)$) with the corresponding coefficients $W\in\R^{c\times m}$:
\begin{equation*}
    g(x(t)) = W^\top \Theta(x(t)).
\end{equation*}
The entries of $\Theta$ are arbitrary linear and nonlinear functions such as sine and cosine or polynomials. Using $N$ (potentially noisy) training data points $((x^1, \dot{x}^1),...,(x^N, \dot{x}^N))$, the objective is to identify a sparse representation of $g$ in terms of $\Theta$, and thus to obtain a robust and interpretable dynamical system from data. The corresponding main objective is thus the following least squares term:
\begin{equation*}
    f(W) =\frac{1}{mcN}\sum_{i = 1}^N \norm{W^\top \Theta(x^i) - \dot{x}^i}_2^2.
\end{equation*}
As this is a convex function, the corresponding MOP is also convex and can thus be addressed using the standard penalization approach, i.e., the weighted sum method $\min_W f(W) + \lambda \|W\|_1$ (where $W$ is transformed to one large weight vector). However, as we will see, the solution may be highly sensitive to the choice of $\lambda$.

We consider the well-known Lorenz system with parameters $\sigma = 10$, $\rho = 28$ and $\beta=\frac{8}{3}$ for which the dynamical system posses a chaotic solution:
\begin{equation*}
    \dot{x} = \begin{pmatrix}
    \sigma\cdot(x_2-x_1)\\
    x_1 \cdot (\rho-x_3)-x_2\\
     x_1 \cdot x_2 - \beta\cdot x_3
    \end{pmatrix}.
\end{equation*}
We collect $N=10,000$ training data points of a single long-term simulation to which we add white noise. The dictionary $\Theta$ consists of polynomial terms up to order $3$ and is thus comprised of $c=20$ terms, i.e., $W \in \R^{20 \times 3}$.

\begin{figure}[b!]
	\centering
	\parbox[b]{0.24\textwidth}{\centering \includegraphics[width=0.24\textwidth]{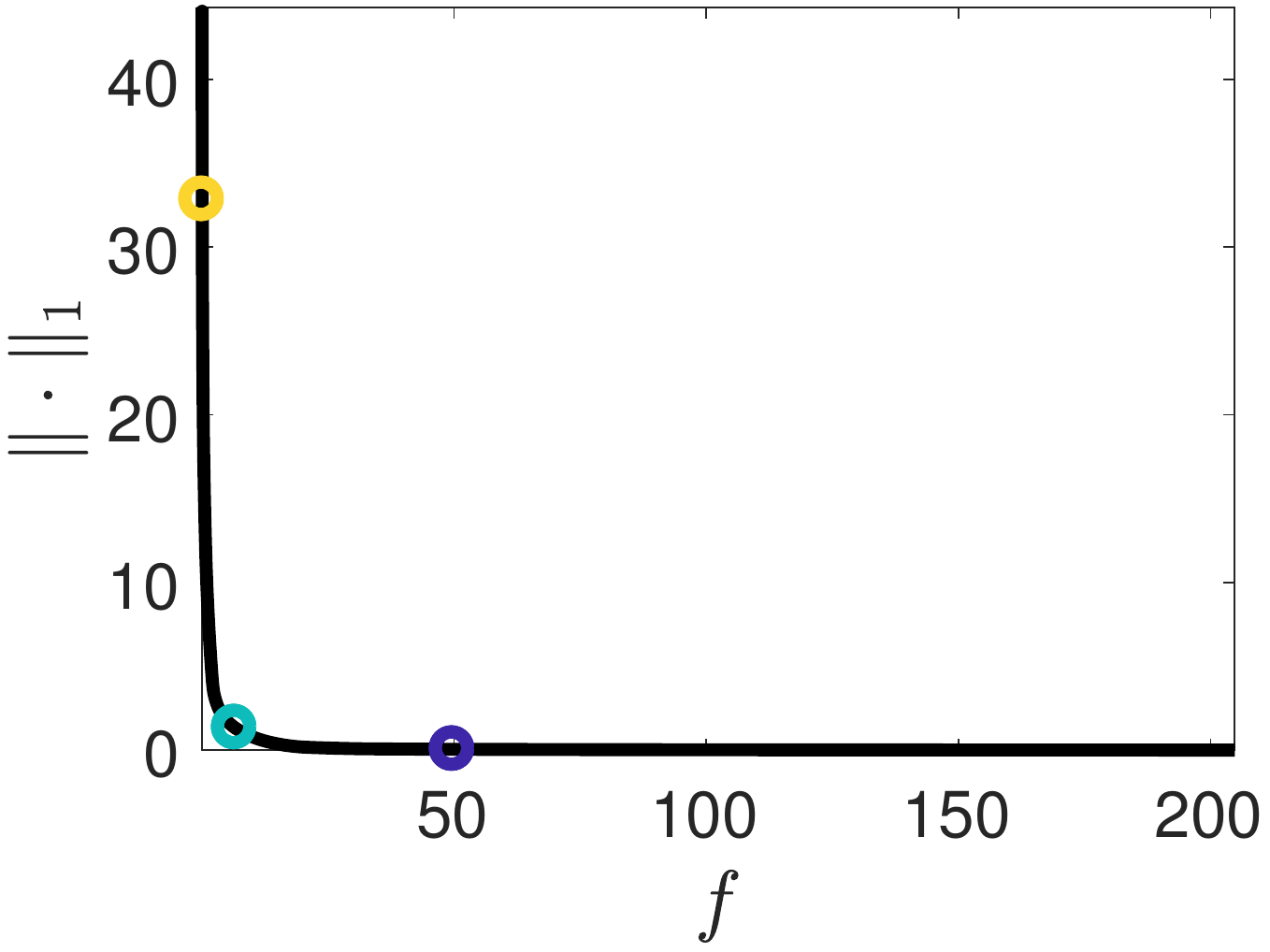} \\ (a) } \hfil
	\parbox[b]{0.24\textwidth}{\centering \includegraphics[width=0.24\textwidth]{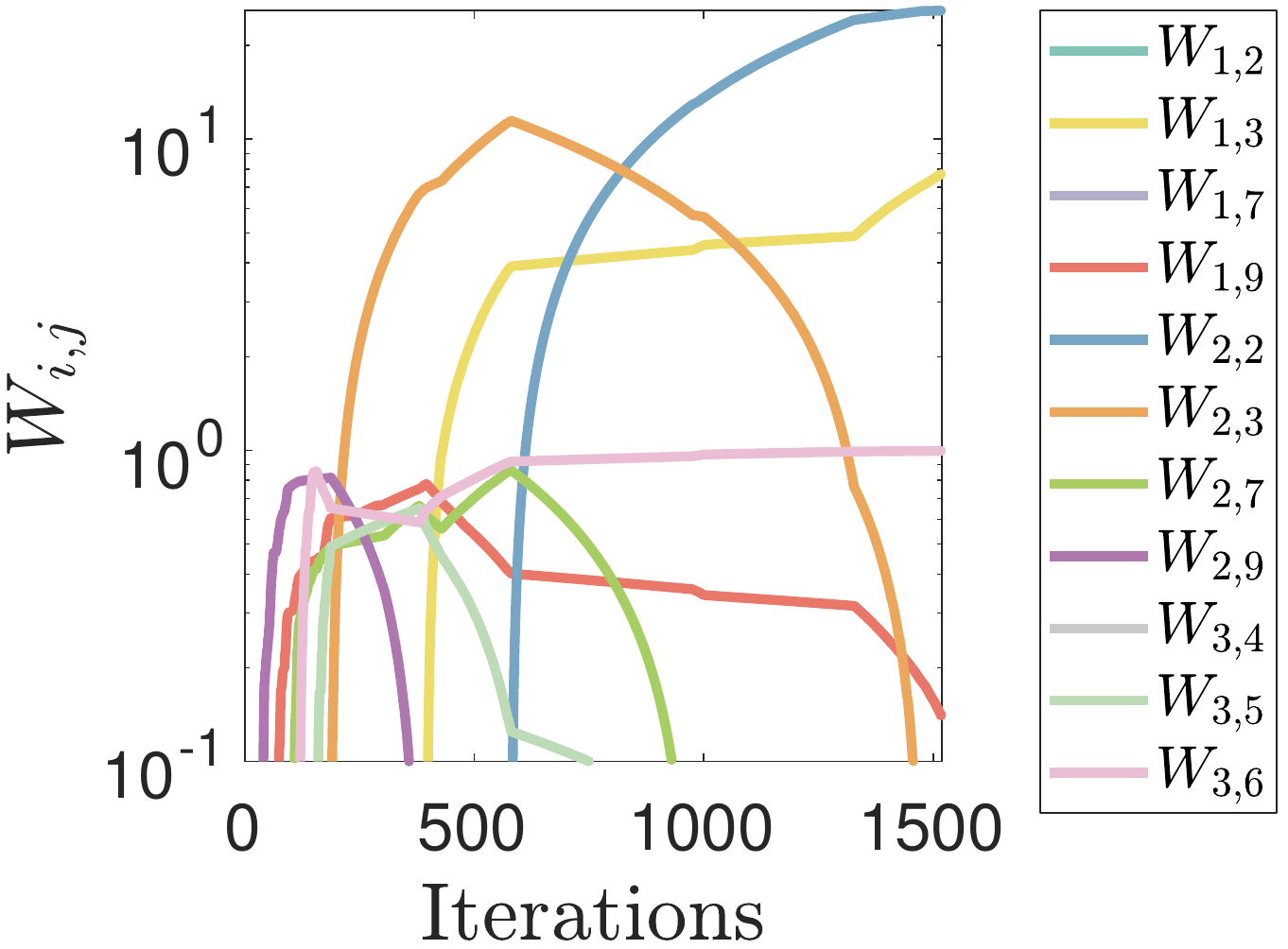} \\ (b) } \ \\
	\parbox[b]{0.49\textwidth}{\centering \includegraphics[width=0.49\textwidth]{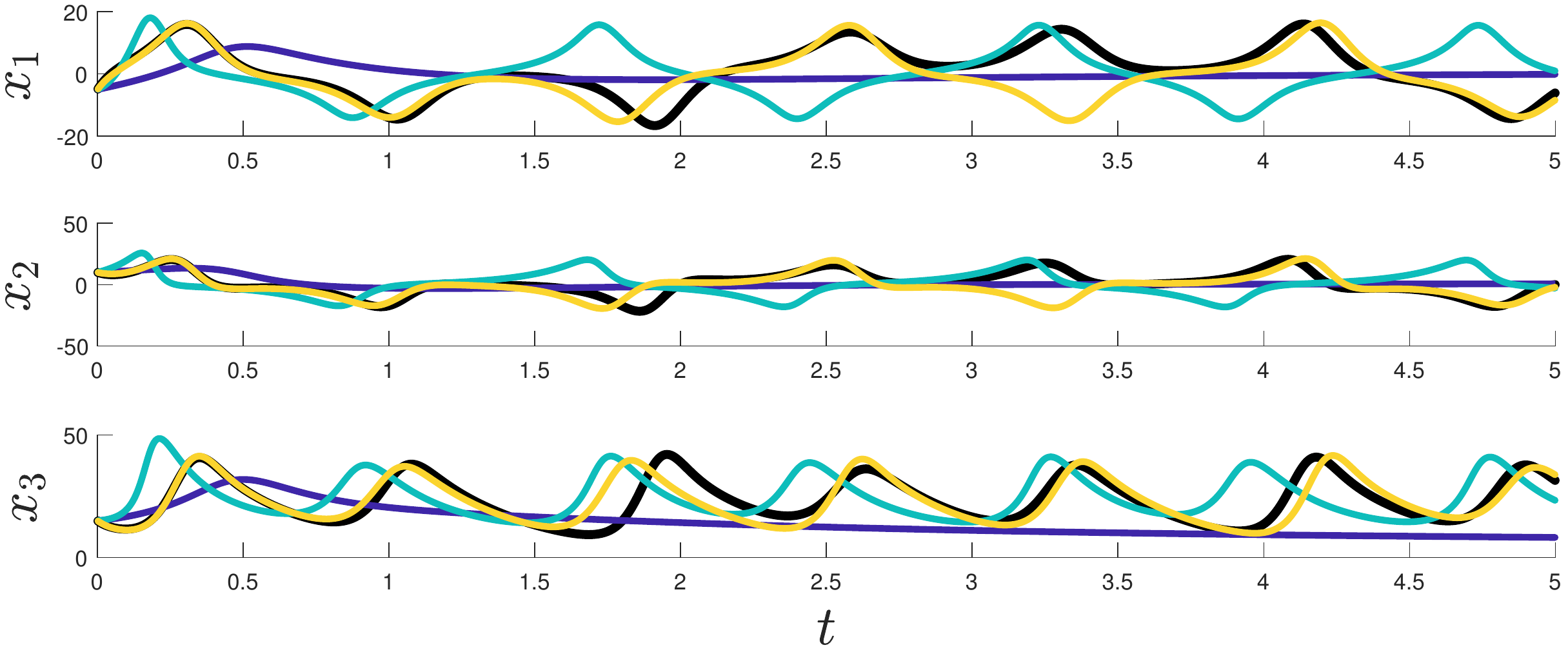} \\ (c) }
	\caption{(a) Convex Pareto front for the SINDy approach applied to the Lorenz system.
	(b) Evolution of all $W_{i,j}$ which become greater than 0.5 over the iterations. (c) Trajectory of the Lorenz system (black) and the resulting trajectories of the approximated solutions where the corresponding point on the Pareto front is marked in (a).}
	\label{fig:Lorenz}
\end{figure}

The Pareto front of the corresponding MOP -- obtained via continuation -- is shown in Figure \ref{fig:Lorenz}(a), the corresponding non-zero weights and three trajectories are visualized in (b) and (c), respectively. We see that although the Pareto front is convex, choosing the weight in a penalization approach can be challenging. Large parts of the Pareto front are almost parallel to either the horizontal or vertical axis, such that a small change in the penalty weight will result in large jumps along the front (cf.~also Figure~\ref{fig:weighted_sum}).
Note that the so-called \emph{knee} point is not necessarily the desired compromise, as the system corresponding to the turquoise point is significantly less accurate on unseen data than the solution corresponding to the yellow point.

\subsection{Neural Network}\label{subsec:neural_network}
Our next example is the training process of the weights $W$ of a feed-forward neural network (NN), i.e., $y=g_W(x)$ \cite{Goodfellow2016}:
\begin{equation*}
    \begin{aligned}
        z_0 &= x, \\
        z_{j + 1} &= h\left( w_j^\top z_j + w_j^0 \right) \quad \mbox{for }j=0,\ldots,K-1, \\
        y &= \sigma(z_K),
    \end{aligned}
\end{equation*}
where $K$ is the number of \emph{hidden layers} and $\sigma$ and $h$ are activation functions. The set of weight matrices, i.e., $W = \{(w_1^0, w_1) \ldots, (w_{K-1}^0, w_{K-1})\}$ (which can again be transformed into a single weight vector), is chosen such that the error on a given training data set $((x^1,y^1),\ldots,(x^N,y^N))$ is minimized. In the case of a classification problem, the error is expressed as the mean of the cross entropy loss, which results in the following objective function:
\begin{equation*}
    f(W) = -\frac{1}{N}\sum_{i = 1}^N \sum_{c=1}^M (y^i_c\log(g_W(x^i)_{c})),
\end{equation*}
where $M$ is the number of classes in the data set and $y_c^i \in \{0,1\}$ indicates whether $x^i$ belongs to class $c$. 
In addition, sparsity is often desired to increase robustness against noise.

We here consider a feed forward neural network with $K=2$ hidden layers to solve the classification task of the well-known Iris data set \cite{fischer1936}. The activation function in the hidden layers is $h(x) = \tanh(x)$, and we chose the softmax function for the output layer, i.e., $\sigma(x)_i = \frac{\exp(x)_i}{\sum_{j=1}^m(\exp(x)_j)}$. Each layer consists of two neurons such that the total number of weights is $|W|=25$. 

\begin{figure}[h!]
	\centering \includegraphics[width=0.44\textwidth]{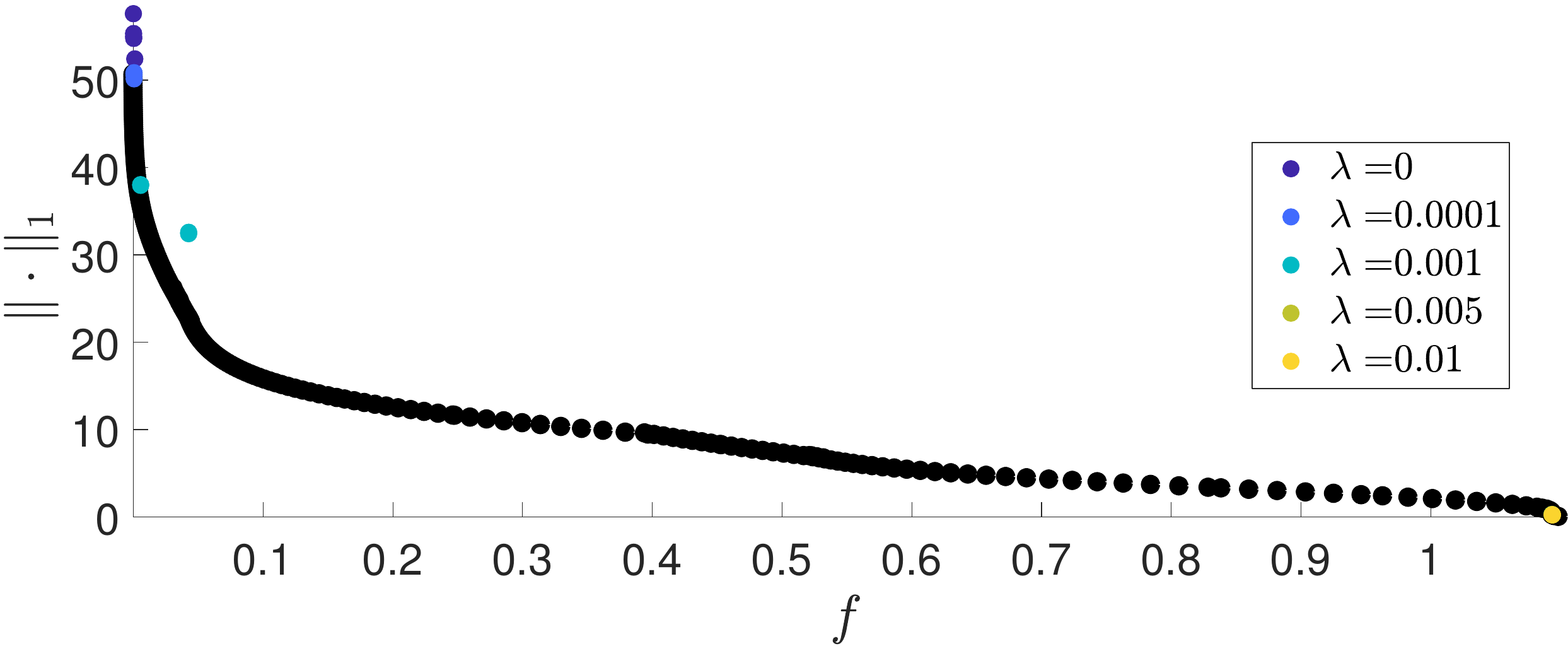} \ \\
	\includegraphics[width=0.22\textwidth]{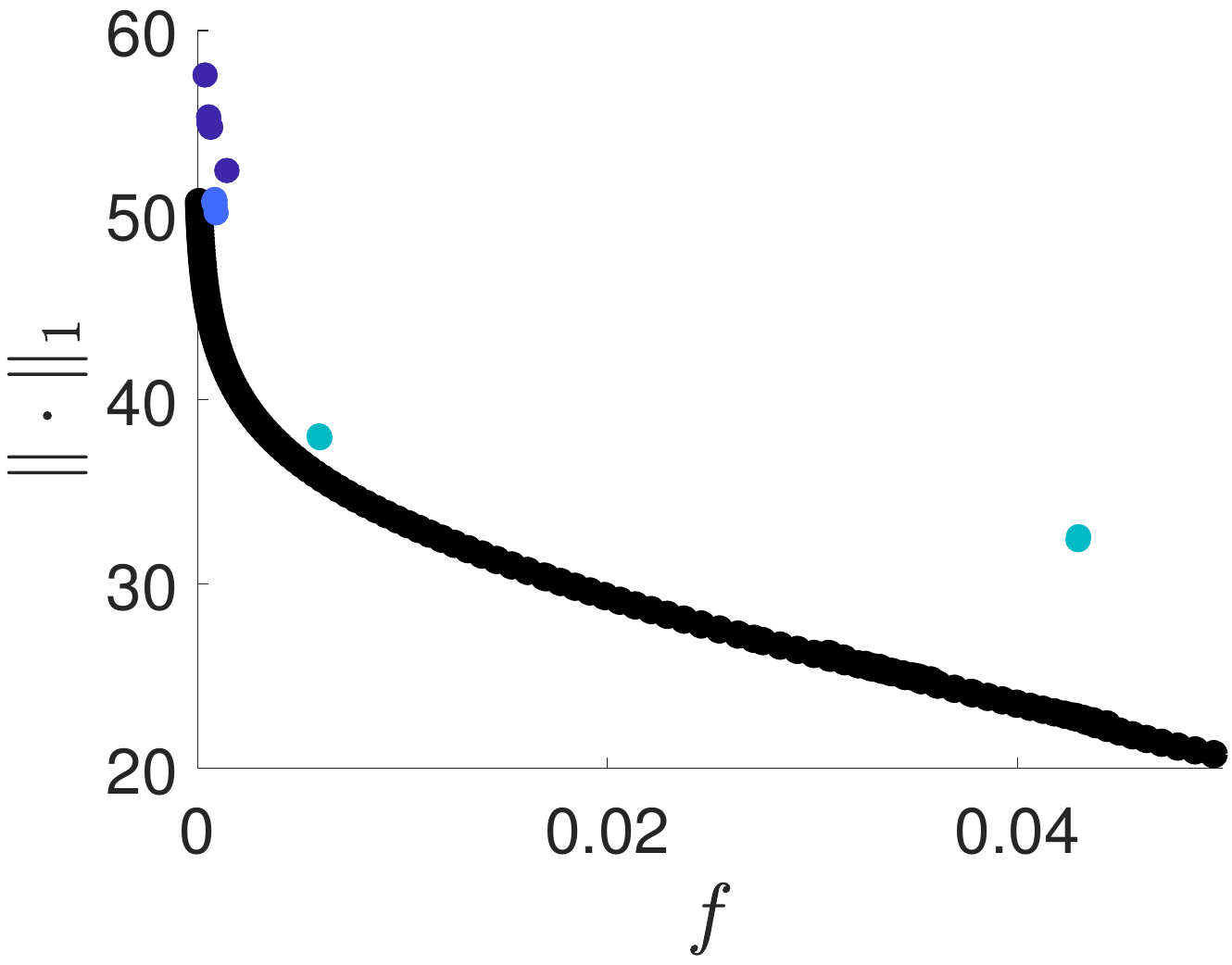} 
	\includegraphics[width=0.22\textwidth]{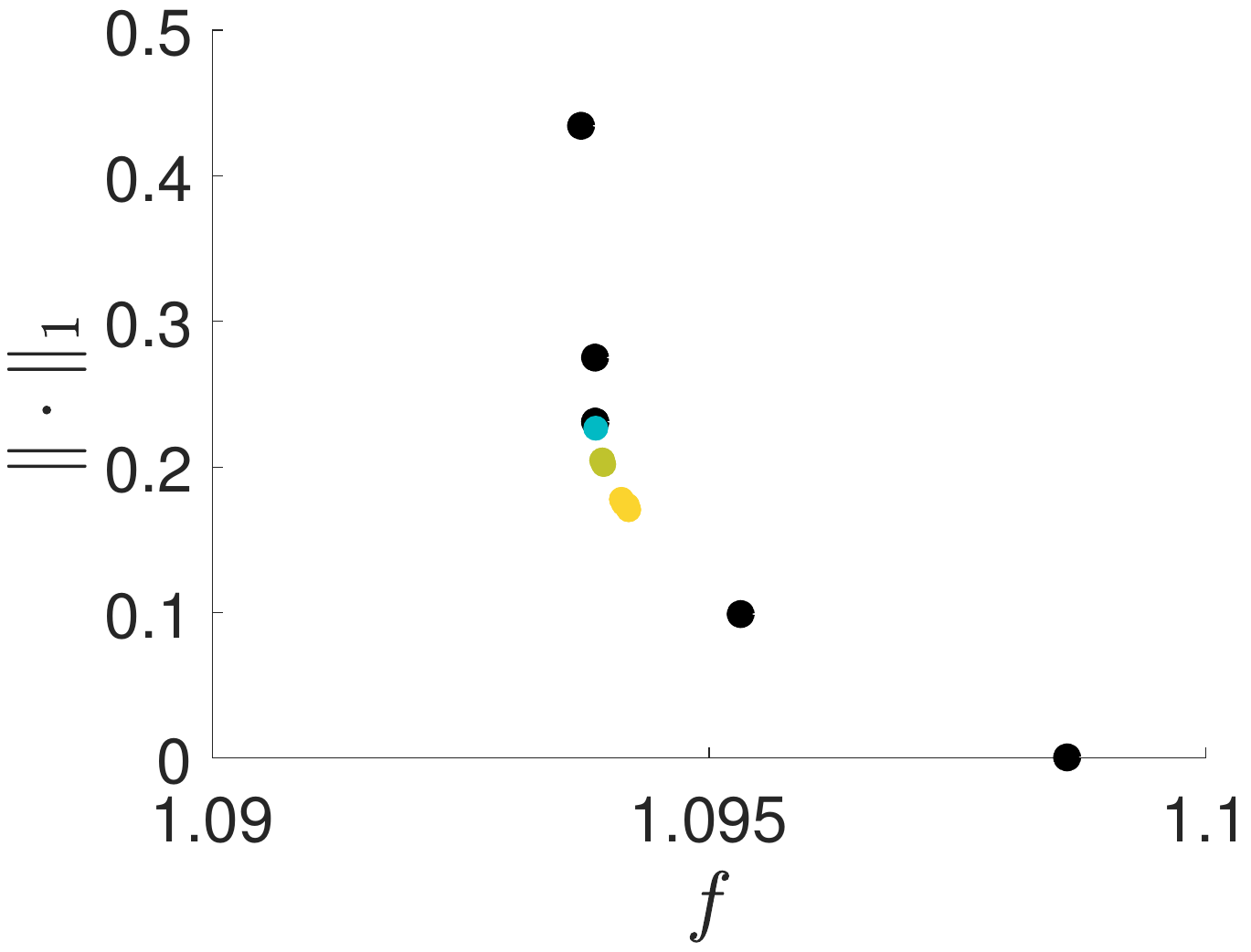} 
	\caption{Pareto front for the NN training on the Iris data set (black). The colored dots show solutions obtained via Adam with different weights for the $\ell_1$ penalty term. These cluster in two places, cf.\ the zoomed in plots in the bottom.
	}
	\label{fig:NeuralNetwork}
\end{figure}

Figure~\ref{fig:NeuralNetwork} shows the Pareto front obtained via the continuation method when starting in $W=0$ and taking $\tau = 0.1$ as step size. We computed three different connected components of the Pareto critical set and applied a non-dominance test afterwards. Since the images of the connected components intersect, they do not appear as distinct connected components in the objective space.
Note that large parts of the front are non-convex.
To compare the solution to stochastic gradient descent training, we also show several solutions obtained using the Adam descent method \cite{KB14} with 5000 epochs and random initial conditions as well as varying weights $\lambda$ for the penalty term $\|W\|_1$. When choosing $\lambda$ too large, the solution always tends towards zero (which is likely due to the non-convexity and the convergence to a local optimum). For small values of $\lambda$ (including $\lambda=0$), we obtain points close to the Pareto front. However, these have larger values of $\|W\|_1$ for a comparable training error.
More critically, for $\lambda = 10^{-3}$ we obtain solutions in both clusters, depending on the initial condition. Finally, solutions with intermediate sparsity are never obtained using Adam, as these appear to be -- in the dynamical systems sense -- less attractive for gradient-descent schemes.
This demonstrates the difficulties of the standard penalization approach and the clear advantage of the continuation method, which produces high-quality trade-off solutions right away. 
This is further emphasized when considering the test error based on data that was not used for training.
Figure~\ref{fig:test_error_iris} shows the test error for the solutions computed with the continuation method and Adam. 
While both approaches are capable of computing solutions with a low training error, we observe that the larger sparsity of the continuation method is significantly more robust in terms of the test error.
\begin{figure}
    \centering
    \includegraphics[width=0.44\textwidth]{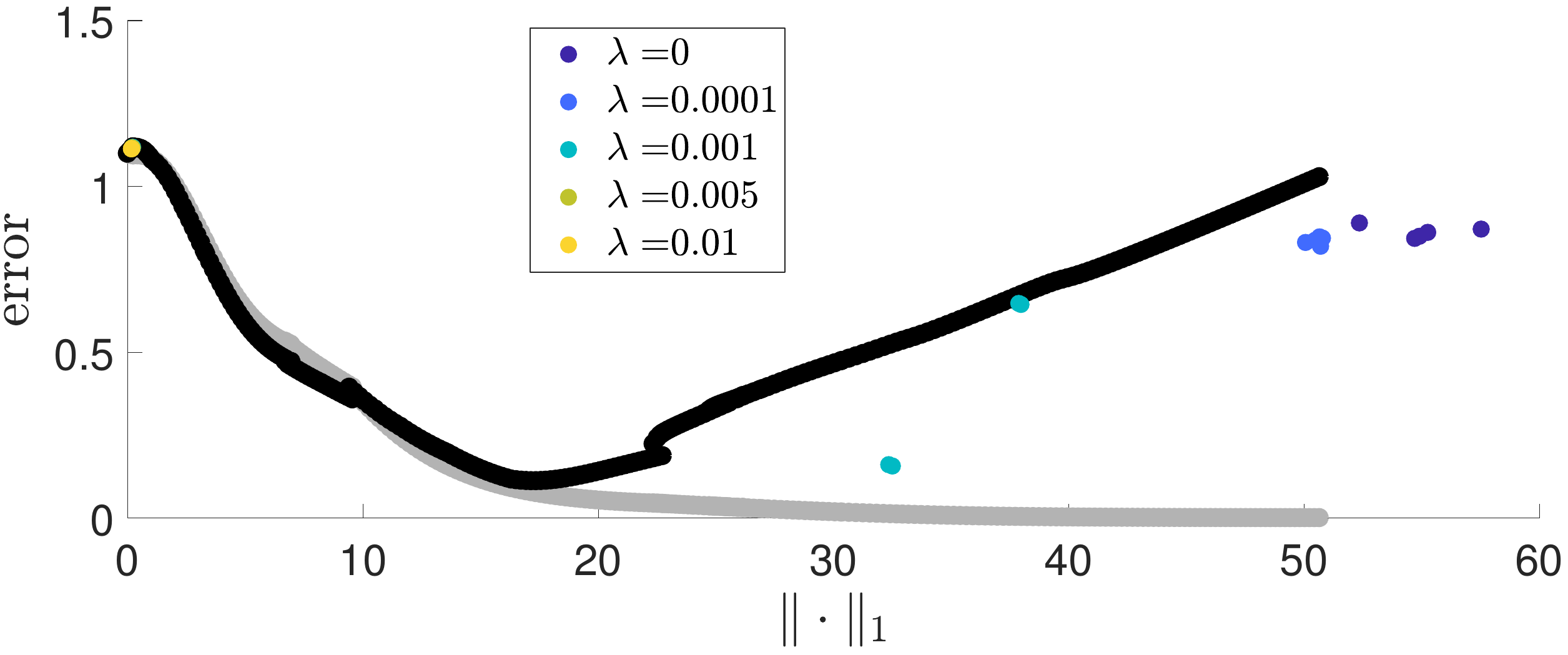}
    \caption{Test error (black) and training error (grey) against $\ell_1$-norm for Pareto critical points computed via continuation. The colored dots show the test error of the Adam solutions.}
    \label{fig:test_error_iris}
\end{figure}

\section{Towards high-dimensional problems} \label{sec:high_dimensional}
Except for the polynomial toy example, the experiments presented in the last section were of low to moderate dimensions. 
In this section, we discuss the general scalability of our method towards higher dimensions and discuss several drawbacks and approaches to resolve some of these. The greatest challenges are related to symmetries, which is a well-known problem in continuation methods \cite{Allgower1990} as well as in other training approaches for neural networks \cite{GB10}.\\
\textbf{Open problems:}
\begin{enumerate}
    \item The computation of the (reduced) Hessian and its inverse are very expensive with increasing dimension $n$.
    \item The necessary conditions for the activation of indices introduced in Section~\ref{subsubsec:activate} often yield a unique direction in which to continue, but there are also cases where several directions remain and the Pareto critical set splits into multiple branches, see also the example in Section~A of the supplementary material. In particular, this is the case if $f$ possesses symmetries, as is often the case for neural networks.
    \item The Hessian may become singular in particular situations, for instance when symmetries occur.
    \item With increasing dimension, the $\varepsilon$-constraint method which we use to find a new component, becomes more expensive and sometimes numerically unstable.
\end{enumerate}
In order to address issue 1), we can use a BFGS or SR1 update (cf. \cite{Nocedal2006}) during the optimization in the Corrector step to obtain an approximation of the Hessian for the predictor step. Furthermore, this way we can directly compute the inverse by a rank-1-update. Experiments have shown that the SR1 update is more suitable in our case.
To handle problem 2), we can apply a greedy strategy, i.e., we proceed only in the first suitable direction and ignore the others. This is reasonable in particular for splitting caused by symmetry, as the branches of $\Pc$ then correspond to the same part of the Pareto front. This way, we can also avoid most of the cases where the Hessian is singular (issue 3)). Nevertheless, it is an open problem how to continue when the Hessian matrix is not regular. The only way to solve problem 4) is to not switch the component. Therefore, one could compute only the \emph{relevant} connected component by computing the appropriate $x_{\textnormal{start}}$ using scalarization (e.g., weighted sum solved with Adam). 
Alternatively, additional knowledge (e.g., regarding the influence of bias neurons on the number of connected components in a neural network) may help in finding suitable transitions to neighboring connected components.
In the following example, we apply the above suggestions to train a larger neural network.
\begin{example}[NN for reduced MNIST]
As in Section~\ref{subsec:neural_network}, we train a feed-forward neural network. We consider the well-known MNIST data set \cite{lecun2010mnist}. The data set consists of images with $28 \times 28$ pixels which results in a large number of trainable parameters even for few neurons. Therefore, we only take images labeled with "$3$" or "$6$" and reduce the number of pixels to $6\times6$, similar to \cite{TF_reduceMNIST}. To classify these images, we use a neural network with $K=2$ hidden layers and four neurons per layer, which results in $\abs{W}=173$ parameters to train. We choose the softplus activation function $h(x) = \log(\exp(x) + 1)$.
Figure~\ref{fig:NeuralNetwork_larger}(a) shows the Pareto front obtained via the continuation method when starting in $W=0$ with $\tau = 0.25$. Similar to Section~\ref{subsec:neural_network}, we computed three components and used a non-dominance test to obtain the results. The shown Adam solutions are obtained with $500$ epochs. 
\begin{figure}[h!]
	\centering
	\parbox[b]{0.22\textwidth}{\centering \includegraphics[width=0.22\textwidth]{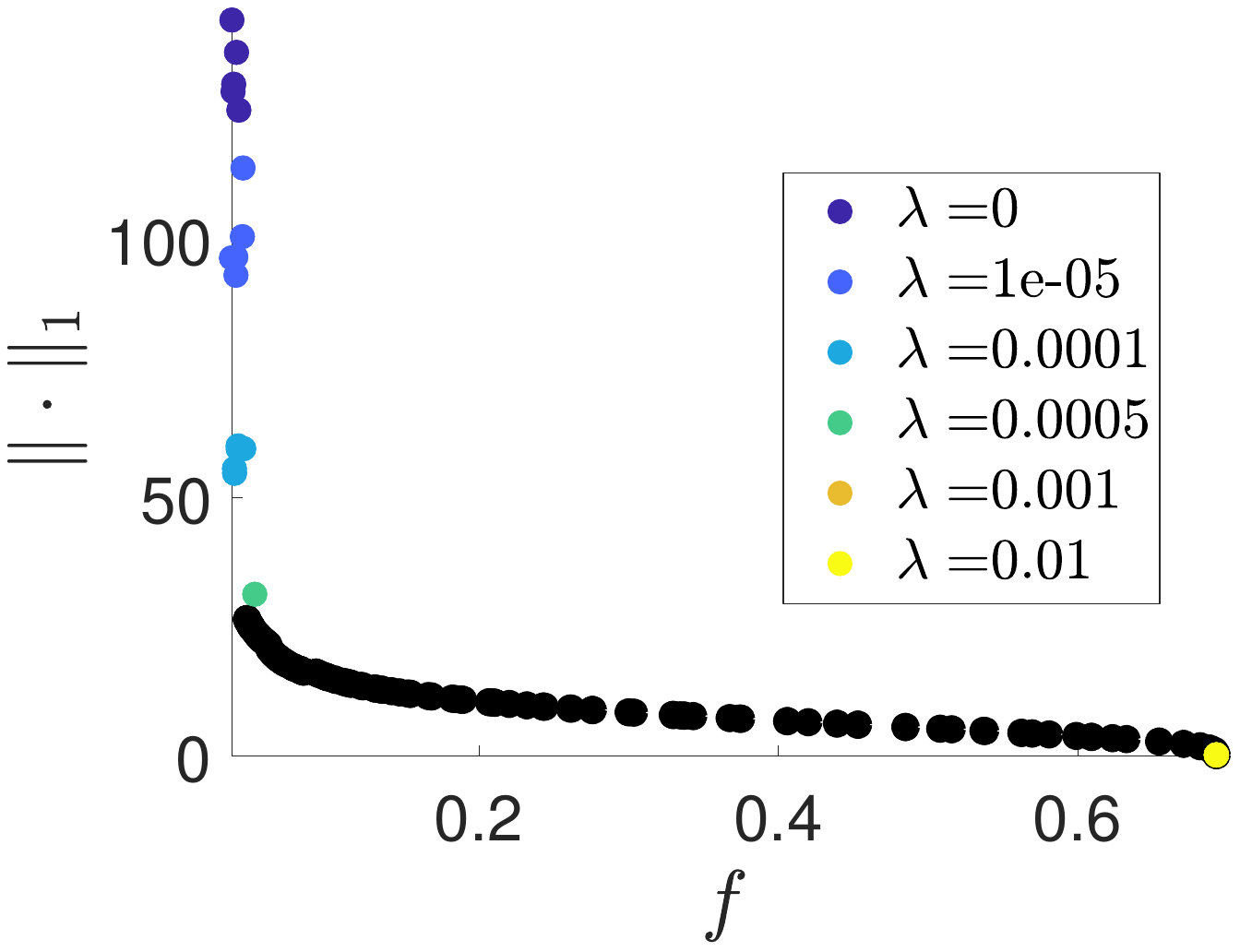}\\ (a) } \hfil
	\parbox[b]{0.22\textwidth}{\centering \includegraphics[width=0.22\textwidth]{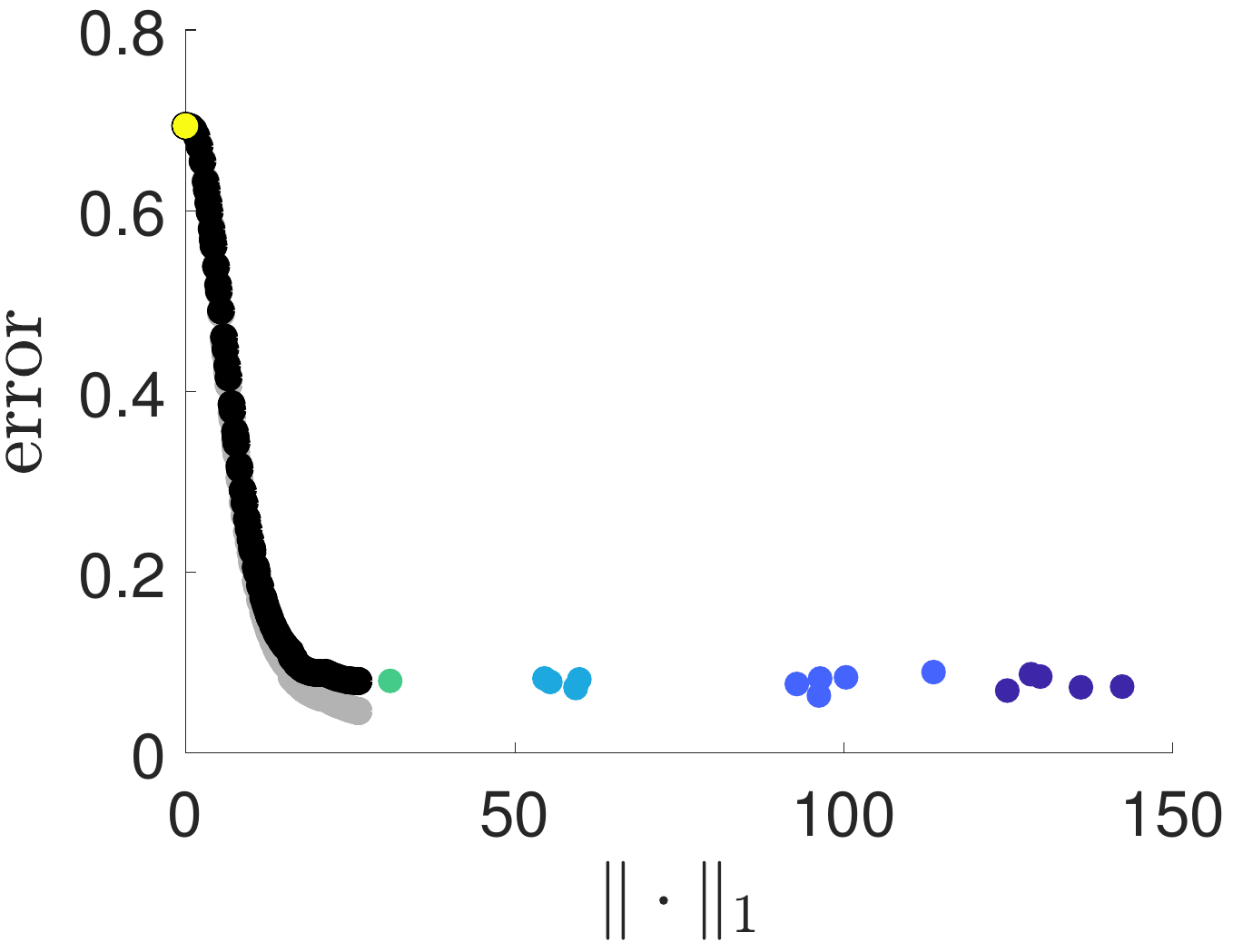}\\ (b)}
	\caption{(a) Pareto front for the NN training on the reduced MNIST data set (black). The colored dots show solutions obtained via Adam with different weights for the $\ell_1$-term. (b) Test error (black) and training error (grey) against the $\ell_1$-norm for Pareto critical points computed via continuation. The colored dots show the test error of the Adam solutions.}
	\label{fig:NeuralNetwork_larger}
\end{figure}
We observe again that they cluster in two places and that they are significantly less sparse. Furthermore, the Adam solutions suggest that there exists another part of the Pareto front with significantly less sparsity but a slightly lower error, which is missing in our solution and would require additional runs with additional $x_{\text{start}}$. However, as can be seen in Figure~\ref{fig:NeuralNetwork_larger}(b), the effect on both training and test error is almost negligible and thus, these solutions are not of particular practical relevance.
Observe that the test error does not increase again, as the best fitting model is not sparse.
\end{example}
\section{Conclusion}\label{sec:conclusion}
The continuation method we present in this paper allows researchers and practitioners with an interest in sparse optimization to systematically calculate the entire set of compromise solutions between sparsity and the main objective, thereby allowing for much more insight as well as informed model selection than weighted-sum approaches or descent methods providing single Pareto optima \cite{KB14,GP21a}.
The presented approach extends the well-known homotopy methods to the much more complex nonlinear problem setting, where weighting approaches fail due to non-convexity. The optimality conditions specifically derived for this problem class allow us to use continuation methods from smooth multiobjective optimization almost everywhere, and to efficiently analyze and exploit the problem structure in points where non-smoothness occurs. 

To unfold the full potential of this multiobjective approach to sparse optimization, several additional questions should be addressed in future work. 
These contain, for instance, how to deal with symmetries, in particular with regard to higher-dimensional problems. This is of particular interest for the training of neural networks with many training parameters.
Furthermore, in order to further extend the range of possible applications, the consideration of additional -- more general non-smooth -- objectives as well as constraints (see, for instance, \cite{LB18}) is certainly of high interest.
Finally, an extension to more than one smooth objective may be useful.
This could be done by combining our results with the approach in \cite{SCM+20}.

\ifCLASSOPTIONcompsoc
  \section*{Acknowledgments}
\else
  \section*{Acknowledgment}
\fi

This research has been funded by the European Union and the German Federal State of North Rhine-Westphalia within the EFRE.NRW project ``SET CPS'', and by the DFG Priority Programme 1962 ``Non-smooth and Complementarity-based Distributed Parameter Systems''.

\ifCLASSOPTIONcaptionsoff
  \newpage
\fi



\bibliographystyle{IEEEtran}
%
\bibliography{references}
%
%
%
%
\begin{IEEEbiography}[{\includegraphics[width=1in,height=1.25in,clip,keepaspectratio]{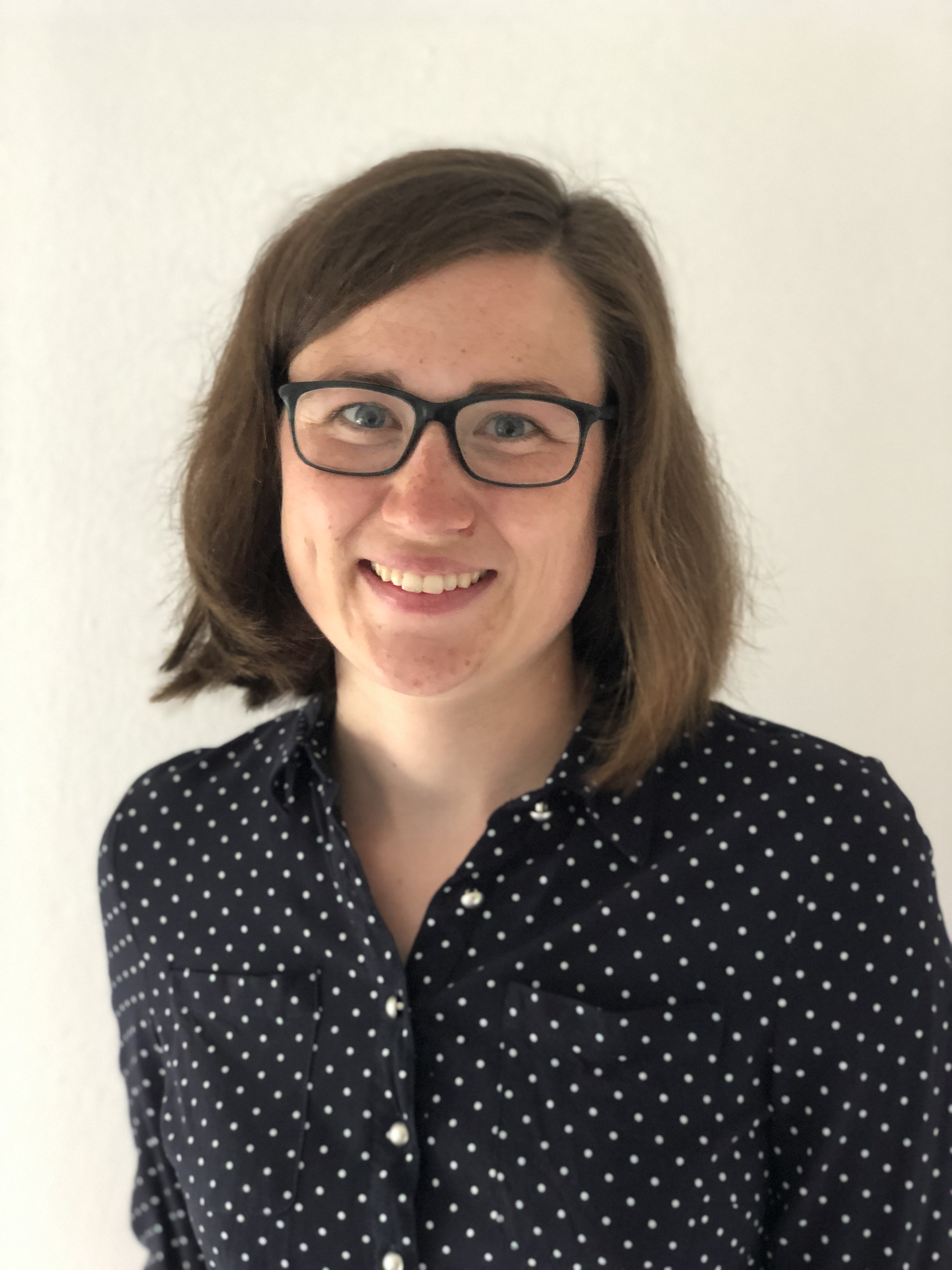}}]{Katharina Bieker}
received the B.Sc. and M.Sc. degree in technomathematics from Paderborn University, Germany, in 2015 and 2017, respectively. She is currently a research assistant at the Chair of Applied Mathematics at Paderborn University and is studying towards a PhD degree with a main focus on data-driven modelling and multiobjective optimization.
\end{IEEEbiography}
\vspace*{-1.0cm}
\begin{IEEEbiography}[{\includegraphics[width=1in,height=1.25in,clip,keepaspectratio]{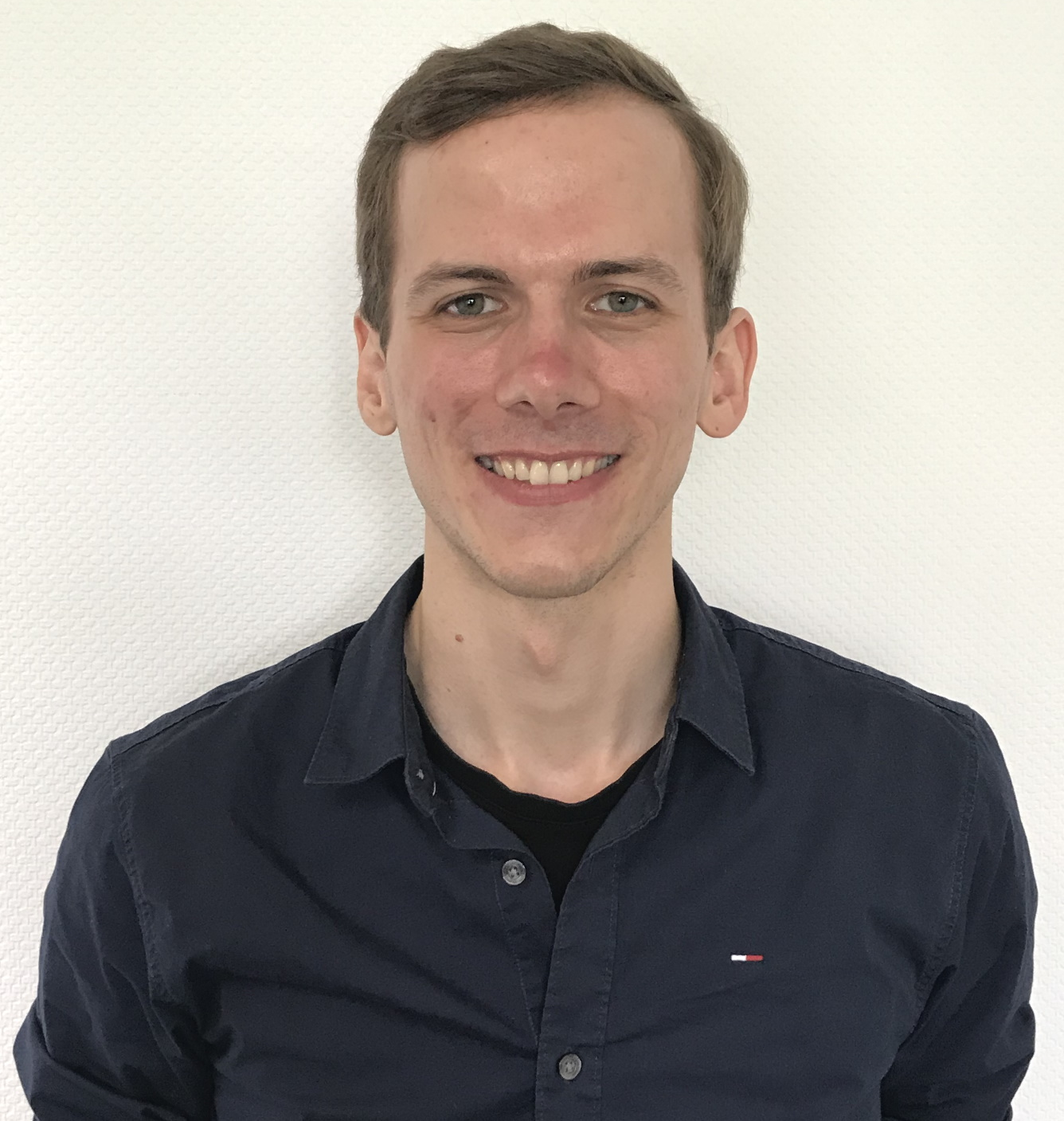}}]{Bennet Gebken}
received the B.Sc. and M.Sc. degree in mathematics from Paderborn University, Germany, in 2014 and 2017, respectively. He is currently a research assistant at the Chair of Applied Mathematics at Paderborn University and is studying towards a PhD degree with a main focus on the structure and computation of Pareto critical sets in multiobjective optimization.
\end{IEEEbiography}
\vspace*{-1.0cm}
\begin{IEEEbiography}[{\includegraphics[width=1in,height=1.25in,clip,keepaspectratio]{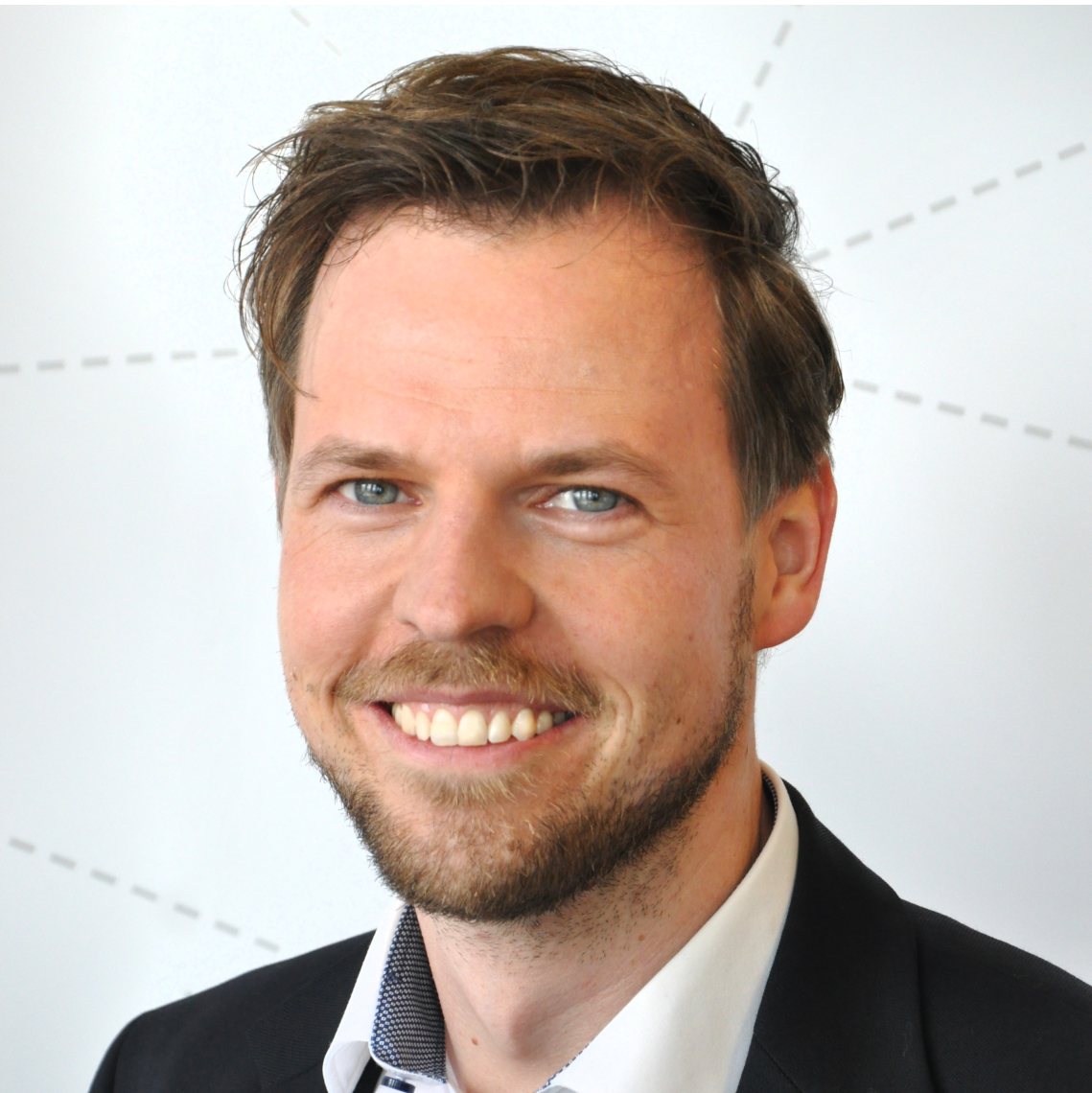}}]{Sebastian Peitz}
received his B.Sc.\ and M.Sc.\ degrees in mechanical engineering from RWTH Aachen University, Germany, in 2011 and 2013, respectively, and his PhD in Mathematics from Paderborn University in 2017. 
He is currently assistant professor for ``Data Science for Engineering'' in the Department of Computer Science at Paderborn University. His research interests are multiobjective optimization, optimal control and data-driven modeling of complex systems.
\end{IEEEbiography}
%
%
%
%
%
%
%
\end{document}